\newtheorem{theorem}{Theorem}[section]
\newtheorem{remark}[theorem]{Remark}
\newtheorem{definition}[theorem]{Definition}
\newtheorem{corollary}[theorem]{Corollary}
\newtheorem{lemma}[theorem]{Lemma}
\newtheorem{example}[theorem]{Example}
\newcommand{\Rb}{\mathbbm{R}}      
\newcommand{\Bc}{\mathcal{B}}
\newcommand{\Qc}{\mathcal{Q}}
\newcommand{\Qb}{\mathbbm{Q}}
\newcommand{\Db}{\mathbbm{D}}
\newcommand{\Eb}{\mathbbm{E}}
\newcommand{\Ib}{\mathbbm{I}}
\newcommand{\Fc}{\mathcal{F}}
\newcommand{\Lc}{\mathcal{L}}
\newcommand{\Pc}{\mathcal{P}}
\newcommand{\Vc}{\mathcal{V}}
\newcommand{\Xc}{\mathcal{X}}
\newcommand{\Yc}{\mathcal{Y}}
\newcommand{\Ic}{\mathcal{I}}
\newcommand{\Wc}{\mathcal{W}}
\newcommand{\Zc}{\mathcal{Z}}
\newcommand{\1}{\mathbbm{1}}
\newcommand{\argmin}{\mathop{\rm argmin}}
\newcommand{\avar}{\text{\rm AVaR}}
\newcommand{\msd}{\text{\rm msd}}
\newcommand{\supp}{\mathop{\rm supp}}
\newcommand{\D}{\mathop{\text{\rm d}\!}}
\newenvironment{tightitemize}{%
    \list{{\textup{$\bullet$}}}{\settowidth\labelwidth{{\textup{\qquad}}}
    \leftmargin\labelwidth \advance\leftmargin\labelsep
    \parsep 0pt plus 1pt minus 1pt \topsep 3pt \itemsep 3pt
    }}{\endlist}
\title{An Integrated Transportation Distance Between Kernels and Approximate
Dynamic Risk Evaluation in Markov Systems\thanks{This work was supported by the National Science Foundation Award DMS-1907522 and by the Office of Naval Research Award N00014-21-1-2161.}}
\author{Zhengqi Lin and Andrzej Ruszczy\'nski}
\date{August 2022}
\begin{document}

\maketitle

\begin{abstract}
We introduce a distance between kernels based on the Wasserstein distances between their values, study its properties, and prove that it is a metric on an appropriately defined space of kernels. We also relate it to various modes
of convergence in the space of kernels. Then we consider the problem of approximating solutions to forward--backward systems, where the forward part is a Markov system described by a sequence of kernels,
and the backward part calculates the values of a risk measure
by operators that may be nonlinear with respect to the system's kernels. We propose to recursively approximate the forward system with the use of the integrated transportation distance between kernels and we estimate the error of the risk evaluation
by the errors of individual kernel approximations. We illustrate the results on stopping problems and several well-known risk measures.
Then we develop a particle-based numerical procedure, in which the approximate kernels have finite support sets. Finally, we illustrate the efficacy of the approach on
the financial problem of pricing an American basket option.\\
\emph{Keywords: }
  Wasserstein Distance, Dynamic Risk Measures, Dynamic Programming

\end{abstract}


\section{Introduction}

We consider a discrete-time Markov system described by the relations:
\begin{equation}
\label{kernel-model}
X_{t+1} \sim Q_t(X_t), \quad t=0,1,\dots,T-1,
\end{equation}
where $X_t\in \Xc$ represents the state at time $t$, $\Xc$ is a Polish space, and $Q_t:\Xc \to \Pc(\Xc)$, $t=0,1,\dots,T-1$, are stochastic kernels (the symbol $\Pc(\Xc)$ denotes the space of probability measures on $\Xc$).
The initial state $X_0=x_0$ is fixed. The model \eqref{kernel-model} is understood as follows: given $X_t=x$, the conditional distribution
of $X_{t+1}$ is $Q_t(x)$. The sequence of kernels $Q_t$, $t=0,\dots,T$, and the distribution of the initial state $\lambda_0$
define a probability measure $P$ on the canonical space $\Xc^{T+1}$. We also consider the filtration $\Fc_t=\Bc(\Xc^{t+1})$, $t=0,\dots,T$.

Suppose a sequence of Borel measurable functions $c_t:\Xc\to\Rb$, $t=0,\dots,T$, is given.
Together with the dynamical system \eqref{kernel-model}, we consider the following backward risk evaluation system
\begin{equation}
\label{DP-risk-finite}
\begin{aligned}
v_t(x) = c_t(x) +  \sigma_t\big(x,Q_t(x), v_{t+1}(\cdot)\big),\quad & x\in \Xc, \quad t=T-1,T-2,\dots,0;\\
v_T(x) = c_T(x), \quad & x \in \Xc.
\end{aligned}
\end{equation}
In equation \eqref{DP-risk-finite}, the operator $\sigma_t:\Xc \times  \Pc(\Xc)\times \Vc\to \Rb$,
where $\Vc$ is a space of Borel measurable real functions on $\Xc$, is a \emph{transition risk mapping}.  Its first argument is the present state~$x$. The second argument is the probability distribution $Q_t(x)$ of the state following $x$ in the system \eqref{kernel-model}. The last argument, the function $v_{t+1}(\cdot)$, is the next state's value: the risk of running the system from the next state
in the time interval from $t+1$ to $T$.  In the next section, we briefly review the background of this backward system
in the dynamic risk theory and provide a more formal definition of the objects involved,
but we want to stress that the
evaluation \eqref{DP-risk-finite} is of relevance for other problems as well.

A simple case of the transition risk mapping is the bilinear form,
\begin{equation}
\label{sigma-E}
\sigma_t\big(x,\mu, v_{t+1}(\cdot)\big) = \Eb_{\mu} \big[ v_{t+1}(\cdot)\big].
\end{equation}
In this case, the
scheme \eqref{DP-risk-finite} evaluates the conditional expectation of the total cost from stage $t$ to the end of the horizon $T$:
\[
v_t(x) = \Eb\big[ c_t(X_t) + \dots + c_T(X_T)\,\big|\, X_t = x \big], \quad x \in \Xc,\quad t=0,\dots,T.
\]
A more interesting application is the \emph{optimal stopping problem}, in which $c_t(\cdot)\equiv 0$, and
\begin{equation}
\label{sigma-stopping}
\sigma_t\big(x,\mu, v_{t+1}(\cdot)\big) = \max\Big( r_t(x)\; ; \; \Eb_{\mu} \big[ v_{t+1}(\cdot)\big]\Big).
\end{equation}
Here, $r_t:\Xc\to \Rb$, $t=0,\dots,T$, represent the rewards collected if the decision to stop at time $t$ and state $x$ is made. Clearly,
with the mappings \eqref{sigma-stopping} used in the scheme \eqref{DP-risk-finite},
\[
v_t(x) = \sup_{{\tau - \text{stopping time}}\atop{t \le \tau \le T}}r_\tau(X_\tau), \quad x \in \Xc,\quad t=0,\dots,T;
\]
see, \emph{e.g.}, \cite{chow}.
The most important difference between \eqref{sigma-E} and \eqref{sigma-stopping} is that the latter is nonlinear with respect to the
probability measure $\mu$.
In the next section, we provide other examples of nonlinear transition risk mappings derived from coherent measures of risk.

One of the challenges associated with the backward system \eqref{DP-risk-finite}
is the numerical solution in the case when the transition risk mappings are nonlinear with respect to the probability measures involved.
The objective of this paper is to present a computational method based on approximating the kernels $Q_t(\cdot)$ by simpler, easier-to-handle kernels $\widetilde{Q}_t(\cdot)$, and using them in the backward system \eqref{DP-risk-finite}. For this purpose, after the
preliminary section, in \S \ref{s:kernel-distance} we introduce the space of kernels under consideration and define a metric on this space.
The metric generalizes the transportation (Wasserstein) metric between probability distributions.
We relate it to various convergence modes in the space of kernels. In \S \ref{s:abstract-method} we describe an iterative scheme
for building the approximate system and we estimate the error of the approximation by the distances of the kernels involved at each stage.
We also illustrate the application of the theory to various specific risk evaluation systems with nonlinear transition risk mappings.
Next, in \S \ref{s:particles}, we specialize
our method by considering kernels supported on finite sets, and we derive tractable linear programming models for minimizing the
approximation error. Finally, in \S \ref{s:illustration}, we illustrate our approach on the problem of evaluating an American basket option.

The problem of approximating stochastic processes in discrete time has attracted the attention of researchers for several decades.
The basic construction is that of a \emph{scenario tree}. In \cite{hoyland2001generating}, the construction of the tree is based on
statistical parameters, such as moments and correlations. A further contribution of \cite{kaut2011shape}  involves copulas to capture the shape of the
distributions. The use of probability metrics to reduce large scenario trees was first proposed in \cite{heitsch2009scenario}. A concept of a distance between stochastic processes
was proposed by \cite{pflug2001scenario}, and used by  \cite{mirkov2007tree,kovacevic2015tree}  to generate scenario trees.
The concept of nested (adapted) distance, using an extension of the Wasserstein metric for processes, was introduced in \cite{pflug2010version} and further developed in \cite{pflug2012distance,pflug2015dynamic}. { Similar ideas are pursued in continuous time in \cite{backhoff2020adapted}.
Ref. \cite{bartl2022sensitivity} addresses the sensitivity of the optimal value of an expected-value problem, when the probability measure perturbation is small in the nested distance.
None of these contributions focuses on Markov systems and the approaches proposed do not reduce to our construction in the Markovian case.}

{ Ref.
\cite{kern2020first} considers perturbations in a transition kernel of a {controlled} Markov system.
The distance between probability kernels defined in \cite[\S 3]{kern2020first} is close to our idea, but it uses the ``sup norm'' over the state space, rather than the ``$\Lc_p$ norm'' in our case (a similar idea appeared earlier in \cite{mirkov2007tree} for scenario trees). This is further used to estimate the error of the
value function in risk-neutral models in \cite{zahle2022concept}. We discuss it in more detail in \S 3 and \S 4.
}

Finally, some recent
contributions focus on mixture models, which are somehow related to our approach, but which measure the distance of mixture distributions rather than kernels.
 The sketched Wasserstein distance, a type of distance metric dedicated to finite mixture models, was proposed in \cite{SW}. Research on Wasserstein-based distances specifically tailored to Gaussian mixture models
 is reported in \cite{GMM1, GMM2, GMM3}.

\section{Preliminaries}
\label{s:preliminaries}

{ In this section, we briefly present the mathematical foundations of the techniques discussed in the paper. In \S \ref{s:2.1}, we summarize the relevant concepts of Markov risk evaluation, and in \S \ref{s:2.2} we recall the basic ideas of the transportation distance between
probability measures.}

\subsection{Markov risk measures}
\label{s:2.1}

A \emph{dynamic risk measure} evaluates the sequence of random costs $Z_t= c_t(X_t)$, $t=0,1,2,\dots,T$,
where $c_t:\Xc\to\Rb$, $t=0,1,\dots,T$, are measurable functions.
 Because of the need to evaluate the risk of the future costs at any time period,  a dynamic measure of risk is a collection of \emph{conditional risk measures} $\rho_{t,T}(Z_t,\dots,Z_T)$, $t=0,\dots,T$. Formally, for $t=0,\dots,T$,
 we consider $\sigma$-subalgebras $\Fc_t = \Bc(\Xc^{t+1})$
 and spaces $\Zc_t$ of $\Fc_t$-measurable real random variables. A conditional risk measure is a functional
 $\rho_{t,T}: \Zc_t \times \cdots \times \Zc_T \to \Zc_t$. We postulate three properties of each conditional risk measure:
 \begin{tightitemize}
 \item[\textbf{Normalization:}] $\rho_{t,T}(0,\dots,0)=0$, $t=0,1,\dots,T$;
\item[\textbf{Monotonicity:}] For every $t=0,\dots,T$, if $Z_s \leq V_s$ for $s=t,\dots,T$, then $\rho_{t,T}(Z_t,\dots,Z_T) \le \rho_{t,T}(V_t,\dots,V_T)$;
\item[\textbf{Translation equivariance:}] $\rho_{t,T}(Z_t,Z_{t+1},\dots,Z_T) = Z_t + \rho_{t,T}(0,Z_{t+1},\dots,Z_T)$, $\forall$ $t=0,\dots,T$.
\end{tightitemize}

 Fundamental for such a
nonlinear dynamic risk evaluation is \emph{time consistency}, discussed in various forms in \cite{ADEHK:2007,CDK:2006,cheridito2011composition,shapiro2021lectures}. We adopt the definition and the following discussion from
\cite{Ruszczynski2010Markov}:
\emph{A dynamic measure of risk is time consistent if  for every $t=0,\dots,{T-1}$, if $Z_t=V_t$ and $\rho_{t+1,T}(Z_{t+1},\dots,Z_T) \le \rho_{t+1,T}(V_{t+1},\dots,V_T)$ a.s., then}
\[
\rho_{t,T}(Z_{t},\dots,Z_T) \le \rho_{t,T}(V_{t},\dots,V_T).
\]
 Such risk measures, under the conditions specified above,
must have a specific recursive form \cite[Thm. 1]{Ruszczynski2010Markov}:
\[
\rho_{t,T}(Z_{t},\dots,Z_T) = Z_t + \rho_t\Big(Z_{t+1}+ \rho_{t+1}\big(Z_{t+2}+ \dots + \rho_{T-1}(Z_T)\cdots\big)\Big),
\]
 where each $\rho_t:\Zc_{t+1}\to \Zc_t$ is a one-step conditional risk measure. This result, generalizing the tower property of conditional expectations, is germane for  our approach.

\emph{Markov risk measures} evaluate the risk of future costs $Z_s= c_s(X_s)$, $s=t,\dots,T$, in a Markov system \eqref{kernel-model}
in such a way that the risk of the future cost sequence is a function of the current state:
\[
\rho_{t,T}(Z_{t},\dots,Z_T) = v_t(X_t).
\]
 This, combined with the properties specified above, implies a very specific structure \cite{fan2018process,bauerle2022markov}:
 transition risk mappings $\sigma_t:\Xc \times  \Pc(\Xc)\times \Vc\to \Rb$, $t=0,\dots,T-1$, exist  such that the risk of each state
 can be evaluated by the procedure \eqref{DP-risk-finite}. Conversely, any collection of transition risk mappings satisfying
 the properties of normalization, monotonicity, and translation equivariance define via \eqref{DP-risk-finite} a time-consistent
  Markov risk measure.

  As mentioned in the introduction, the simplest transition risk mappings are the bilinear forms \eqref{sigma-E}, which lead to the risk-neutral evaluation: the expected value of the sum of the costs. A more interesting example is the \emph{mean--semideviation} mapping
  derived from the corresponding coherent risk measure \cite{ogryczak1999stochastic,ogryczak2001consistency,shapiro2021lectures}:
  \begin{multline}
\label{msd-form}
\msd_p\big(x,\mu,v_{t+1}(\cdot)\big) = \int_\Xc v_{t+1}(y)\;\mu(\D y)\\
+ \varkappa(x) \bigg(\int_\Xc \Big[ v(y) -  \int_\Xc v(y')\;\mu(\D y')\Big]^p_+\;\mu(\D y)\bigg)^{1/p},
\end{multline}
with $p\in [1,\infty)$, and the parameter $\varkappa(x)\in [0,1]$ controlling the degree of risk aversion.

Another example is the Average Value at Risk \cite{rockafellar2000optimization,ogryczak2002dual,shapiro2021lectures}:
\begin{equation}
\label{avar-def}
\avar_\alpha\big(x,\mu,v_{t+1}(\cdot)\big)  = \inf_{\eta \in \Rb} \Big\{ \eta + \frac{1}{\alpha}
\Eb_{\mu}\big[\max(0,v_{t+1}(\cdot)-\eta)\big]\Big\},\quad \alpha \in (0,1].
\end{equation}
Usually, it does not occur alone, but rather in mixtures, as in \emph{spectral} measures (see, \emph{e.g.}, \cite{pflug2007modeling,shapiro2021lectures})
\begin{equation}
\label{spectral}
\sigma_t\big(x,\mu,v_{t+1}(\cdot)\big) = \int_0^1 \avar_\alpha\big(x,\mu,v_{t+1}(\cdot)\big) \;\theta(\D\alpha),
\end{equation}
where $\theta$ is a probability measure on $(0,1]$.

Summing up, the risk evaluation procedure \eqref{DP-risk-finite} is not an arbitrary construction,
but rather the result of
assumptions of normalization, monotonicity, translation, time consistency, and the Markov property.
The transition risk mappings are nonlinear operators with respect to the probability measure, and the
numerical evaluation of risk is a difficult task. Structures of the form \eqref{DP-risk-finite} arise also in the discretization of backward stochastic differential
equations \cite{ruszczynski2020dual}.
For recent applications of Markov risk measures in the control of dynamical systems, see \cite{majumdar2020should,sopasakis2019risk,kose2021risk}.

\subsection{The Wasserstein distance}
\label{s:2.2}

Another essential ingredient of our construction is the Wasserstein distance between measures. As before,
$\Xc$ is a Polish space, with the metric $d(\cdot,\cdot)$, and the associated Borel $\sigma$-field $\Bc(\Xc)$.
The symbol $\Pc(\Xc)$ denotes the space of probability measures on $\Bc(\Xc)$.  For $p \ge 1$, we consider the
space
    \[
\Pc_{p}(\mathcal{X}):=\left\{\mu \in \Pc(\mathcal{X}) : \ \int_{\mathcal{X}} d\left(x_{0}, x\right)^{p} \;\mu(d x)<+\infty\right\},
\]
where $x_{0} \in \mathcal{X}$ is arbitrary.
In the brief summary below, we follow \cite{villani2009optimal}. The reader is referred to this monograph, as well as to \cite{rachev1998mass},
for an extensive exposition and historical account.

\begin{definition} \label{Wass_d1}
The Wasserstein distance of order $p \in[1, \infty)$ between two probability measures $\mu, \nu\in \Pc_p(\Xc)$ is defined by the formula
\begin{equation} \label{Wass}
W_{p}(\mu, \nu) =\left(\inf _{\pi \in \Pi(\mu, \nu)} \int_{\mathcal{X} \times \mathcal{X} } d(x, y)^{p} \;  \pi({\D x}, {\D y})\right)^{1 / p} ,
\end{equation}
where $\Pi(\mu, \nu)$ is the set of all probability measures in $\Pc_p(\Xc\times\Xc)$ with the marginals $\mu$ and $\nu$.
The measure $\pi^* \in \Pi(\mu, \nu)$ that realizes the infimum in Eq. \eqref{Wass} is called the \emph{optimal coupling} or the \emph{optimal transport plan}.
\end{definition}
For each $p\in [1,\infty)$, the function $W_{p}(\cdot,\cdot)$ defines a metric on $\Pc_{p}(\mathcal{X})$. Furthermore, for all $\mu,\nu\in \Pc_{p}(\mathcal{X})$ the optimal coupling realizing the infimum in \eqref{Wass} exists.
From now on, the space $\Pc_{p}(\mathcal{X})$ will be always equipped with the distance $W_p(\cdot,\cdot)$.
\begin{remark}
\label{r:discrete}
{\rm
Problem \eqref{Wass} has a convenient linear programming representation for discrete measures.
Let $\mu$ and $\nu$ be discrete measures in $\Pc(\mathcal{X})$, supported at positions $\{x^{(i)}\}_{i=1}^{N}$ and $\{z^{(s)}\}_{s=1}^{S}$ with normalized (totaling 1) positive weight vectors $w_{x}$ and $w_{z}$:
$\mu=\sum_{i=1}^{N} w_{x}^{(i)} \delta_{x^{(i)}}$, $\nu=\sum_{s=1}^{S} w_{z}^{(s)} \delta_{z^{(s)}}$.
For $p \geq 1$, let $D \in {R}_{+}^{N \times S}$ be the distance matrix defined as $D_{i s}=d(x^{(i)},z^{(s)})^{p_{\phantom{|}}}$. Then the $p$th power of the $p$-Wasserstein distance between the measures $\mu$ and $\nu$ is the optimal value of the following transportation problem:
\begin{equation} \label{12}
\min _{\pi \in {R}_{+}^{N \times S}}\sum_{i s}  D_{i s} \pi_{i s}\quad
\text { s.t.} \quad  \pi^\top \1_{N}=w_{x}, \
 \ \;\; \pi \1_{S}=w_{z}.
\end{equation}
 Its regularized version can be efficiently solved { with almost
 linear complexity with respect to $NS$};  see \cite{sinkhorn1,Greenkhorn}}.
\end{remark}

The following classical result, known as the Kantorovich–Rubinstein duality \cite{kantorovich1958space}, provides an alternative characterization of $W_1(\cdot,\cdot)$.
\begin{theorem}
\label{t:KR}
For any $\mu, \nu$ in $\Pc_{1}(\mathcal{X})$,
\begin{equation}\label{dual_wass}
W_{1}(\mu, \nu)=\sup _{\|\psi\|_{\text {\rm Lip}} \leq 1}\left\{\int_{\mathcal{X}} \psi(x)\; \mu({\D x})-\int_{\mathcal{X}} \psi(x) \; \nu({\D x})\right\},
\end{equation}
where $\|\psi\|_{\text{\rm Lip}}$ denotes the minimal Lipschitz constant of the function $\psi:\Xc\to \Rb$.
\end{theorem}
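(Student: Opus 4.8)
The plan is to establish the two inequalities between $W_1(\mu,\nu)$ and the dual supremum separately: the bound ``$\le W_1$'' for the supremum is elementary, while the reverse bound rests on the Kantorovich duality for a general cost followed by a reduction to $1$-Lipschitz potentials. For the elementary direction, fix $\psi$ with $\|\psi\|_{\text{\rm Lip}}\le 1$; then $|\psi(x)|\le|\psi(x_0)|+d(x_0,x)$, so $\psi\in L^1(\mu)\cap L^1(\nu)$ because $\mu,\nu\in\Pc_1(\Xc)$, and for any $\pi\in\Pi(\mu,\nu)$ the marginal property gives
\[
\int_\Xc \psi\,\D\mu-\int_\Xc \psi\,\D\nu=\int_{\Xc\times\Xc}\big(\psi(x)-\psi(y)\big)\,\pi(\D x,\D y)\le\int_{\Xc\times\Xc} d(x,y)\,\pi(\D x,\D y).
\]
Taking the infimum over $\pi\in\Pi(\mu,\nu)$ on the right and then the supremum over admissible $\psi$ on the left yields $\sup_{\|\psi\|_{\text{\rm Lip}}\le1}\{\cdots\}\le W_1(\mu,\nu)$.

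For the reverse inequality I would invoke the general Kantorovich duality for the lower semicontinuous, nonnegative cost $c(x,y)=d(x,y)$ on the Polish space $\Xc$ --- citing \cite{villani2009optimal}, or, if a self-contained argument is wanted, deriving it by a Fenchel--Rockafellar argument in the pairing between $C_b(\Xc\times\Xc)$ and finite signed measures, the care points being the lower semicontinuity and nonnegativity of $d$ and the weak compactness of $\Pi(\mu,\nu)$ (the latter also producing the optimal coupling quoted after Definition~\ref{Wass_d1}). This gives
\[
W_1(\mu,\nu)=\sup\Big\{\int_\Xc\varphi\,\D\mu+\int_\Xc\zeta\,\D\nu\;:\;\varphi,\zeta\in C_b(\Xc),\ \varphi(x)+\zeta(y)\le d(x,y)\ \forall x,y\Big\}.
\]
Given an admissible pair $(\varphi,\zeta)$, set $\zeta^{d}(x):=\inf_{y\in\Xc}\big(d(x,y)-\zeta(y)\big)$. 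Each map $x\mapsto d(x,y)-\zeta(y)$ is $1$-Lipschitz, hence so is $\zeta^{d}$, and it is finite-valued since $\zeta$ is bounded; thus $\zeta^{d}\in L^1(\mu)\cap L^1(\nu)$ by the $\Pc_1$ bound again. Admissibility gives $\varphi\le\zeta^{d}$ pointwise, while by definition $\zeta^{d}(x)+\zeta(y)\le d(x,y)$, so $\zeta(y)\le(\zeta^{d})^{d}(y):=\inf_x\big(d(x,y)-\zeta^{d}(x)\big)$. The decisive identity is $u^{d}=-u$ for every $1$-Lipschitz $u$ and the metric cost: taking $x=y$ gives $u^{d}(y)\le-u(y)$, and $u(x)\le u(y)+d(x,y)$ gives $d(x,y)-u(x)\ge-u(y)$, hence $u^{d}(y)\ge-u(y)$. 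With $u=\zeta^{d}$ this yields $(\zeta^{d})^{d}=-\zeta^{d}$, so $\zeta\le-\zeta^{d}$, and therefore
\[
\int_\Xc\varphi\,\D\mu+\int_\Xc\zeta\,\D\nu\le\int_\Xc\zeta^{d}\,\D\mu-\int_\Xc\zeta^{d}\,\D\nu\le\sup_{\|\psi\|_{\text{\rm Lip}}\le1}\Big\{\int_\Xc\psi\,\D\mu-\int_\Xc\psi\,\D\nu\Big\}.
\]
Taking the supremum over $(\varphi,\zeta)$ and combining with the elementary direction gives the asserted equality.

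The main obstacle is the general Kantorovich duality on a non-compact Polish space, where integrability of the potentials and the absence of compactness need genuine care; once it is in hand, the $d$-transform reduction above is purely algebraic. If one prefers to bypass the abstract duality, an alternative plan is to prove the statement first for finitely supported $\mu,\nu$ via finite-dimensional linear programming duality applied to the transportation problem \eqref{12} --- the dual optimum being realized by a potential that is $1$-Lipschitz on the finite support and extended to $\Xc$ by the McShane--Whitney formula without increasing its Lipschitz constant --- and then to pass to the limit using the density of finitely supported measures in $(\Pc_1(\Xc),W_1)$ together with the fact that both sides of \eqref{dual_wass} are $1$-Lipschitz functions of $(\mu,\nu)$ in the $W_1$ metric (the latter being a consequence of the elementary direction just proved). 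I would keep this route as a fallback.
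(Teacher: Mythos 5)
Your argument is correct and is the standard textbook proof (essentially Villani's ``Particular Case'' of Kantorovich duality for a metric cost). The paper itself does not prove Theorem~\ref{t:KR}: it quotes it as a classical result with a citation, adding only the remark that in the discrete case it follows from linear programming duality for problem~\eqref{12}. So there is nothing in the paper to compare against line by line; what you supply is a genuine proof where the paper supplies a reference. Both halves of your argument check out: the elementary bound via couplings uses only the marginal property and the $1$-Lipschitz estimate $\psi(x)-\psi(y)\le d(x,y)$, with integrability of $\psi$ secured by $\mu,\nu\in\Pc_1(\Xc)$; and the reverse bound correctly reduces the general dual pair $(\varphi,\zeta)$ to a single $1$-Lipschitz potential via the $d$-transform, the key identity $u^{d}=-u$ for $1$-Lipschitz $u$ being verified exactly as you state (the symmetry of $d$ is what lets you ignore which variable the transform acts on). The only ingredient you import without proof is the general Kantorovich duality on a Polish space, which you flag honestly and which is legitimate to cite; your fallback route through finite LP duality, McShane--Whitney extension, and density of finitely supported measures in $(\Pc_1(\Xc),W_1)$ is also sound and is in fact closer in spirit to the paper's own one-line remark about the discrete case.
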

In the discrete case, it follows from the linear programming duality for problem \eqref{12}.


We now briefly review the convergence concepts in the space $\Pc_p(\Xc)$.
The notation $\mu_{k} \rightharpoonup \mu$ means that $\mu_{k}$ converges weakly to $\mu$, i.e. $\int \varphi(x)\; \mu_{k}({\D x}) \rightarrow \int \varphi(x) \;\emph{} \mu({\D x})$ for all bounded continuous functions $\varphi:\Xc \to \Rb$.

\begin{definition}
\label{d:villani-def-1}
    Let $(\mathcal{X}, d)$ be a Polish space, and $p \in[1, \infty)$. Let $\left\{\mu_{k}\right\}_{k \in {N}}$ be a sequence of probability measures in $\Pc_{p}(\Xc)$ and let $\mu$ be an element of $\Pc_{p}(\mathcal{X})$. Then $\left\{\mu_{k}\right\}$  is said to converge to $\mu$ weakly in $\Pc_{p}(\mathcal{X})$, written $\mu_k \overset{p}\to \mu$,
   if for some (and then any) $x_{0} \in \mathcal{X}$, and
for all continuous functions $\varphi$ with $|\varphi(x)| \leq 1+d\left(x_{0}, x\right)^{p}$ one has
\begin{equation}
    \int \varphi(x) \; \mu_{k}({\D x}) \longrightarrow \int \varphi(x) \; \mu({\D x}).
\end{equation}
\end{definition}

The fundamental property of the Wasserstein distance $W_p(\cdot,\cdot)$ is that it metricizes the topology of weak
convergence in $\Pc_p(\Xc)$.
\begin{theorem}
\label{t:metrization}
Let $(\Xc,d)$  be a Polish space, $p\in [1,\infty)$; then
$\mu_k \overset{p}\to \mu$ if and only if
 $W_p(\mu_k ,\mu) \to 0$.
Furthermore,  $\big(\Pc_{p}(\mathcal{X}),W_{p}\big)$ is a Polish space.
\end{theorem}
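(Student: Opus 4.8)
The plan is to prove the metrization claim in both directions and then establish separability and completeness of $(\Pc_p(\Xc),W_p)$ (it is already a metric space by the discussion preceding the statement). Fix a reference point $x_0\in\Xc$ and set $m_p(\nu):=\int_\Xc d(x_0,x)^p\,\nu(\D x)$; since $\delta_{x_0}$ has a unique coupling with any $\nu$, one has $m_p(\nu)=W_p(\nu,\delta_{x_0})^p$, so by the triangle inequality $\nu\mapsto m_p(\nu)^{1/p}$ is $1$-Lipschitz for $W_p$. Two standard facts will be used repeatedly: $W_1\le W_p$ (Jensen's inequality for $t\mapsto t^p$ applied to an optimal plan), and Skorokhod's representation theorem on the Polish space $\Xc$. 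For the implication $W_p(\mu_k,\mu)\to0\Rightarrow\mu_k\overset{p}\to\mu$: for every bounded Lipschitz $\psi$, Theorem~\ref{t:KR} gives $\big|\int\psi\,\D\mu_k-\int\psi\,\D\mu\big|\le\|\psi\|_{\mathrm{Lip}}\,W_1(\mu_k,\mu)\le\|\psi\|_{\mathrm{Lip}}\,W_p(\mu_k,\mu)\to0$, and since bounded Lipschitz functions determine weak convergence on a metric space, $\mu_k\rightharpoonup\mu$; the Lipschitz property above also gives $m_p(\mu_k)\to m_p(\mu)$. By Skorokhod, realize $Y_k\sim\mu_k$ and $Y\sim\mu$ on a common probability space with $Y_k\to Y$ a.s.; then for any continuous $\varphi$ with $|\varphi|\le 1+d(x_0,\cdot)^p$ we have $\varphi(Y_k)\to\varphi(Y)$ a.s. and $|\varphi(Y_k)|\le 1+d(x_0,Y_k)^p\to 1+d(x_0,Y)^p$ a.s. with convergent expectations (using $m_p(\mu_k)\to m_p(\mu)$), so the generalized dominated convergence theorem yields $\int\varphi\,\D\mu_k\to\int\varphi\,\D\mu$, which is exactly Definition~\ref{d:villani-def-1}.

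For the converse $\mu_k\overset{p}\to\mu\Rightarrow W_p(\mu_k,\mu)\to0$: testing Definition~\ref{d:villani-def-1} against bounded continuous $\varphi$ and against $\varphi=d(x_0,\cdot)^p$ gives $\mu_k\rightharpoonup\mu$ and $m_p(\mu_k)\to m_p(\mu)$. Apply Skorokhod again to get $Y_k\sim\mu_k$, $Y\sim\mu$ with $Y_k\to Y$ a.s.; then $d(Y_k,Y)^p\to0$ a.s., while $d(Y_k,Y)^p\le 2^{p-1}\big(d(x_0,Y_k)^p+d(x_0,Y)^p\big)$, whose right-hand side converges a.s.\ to $2^{p}d(x_0,Y)^p$ with convergent expectations; the generalized dominated convergence theorem gives $\Eb\big[d(Y_k,Y)^p\big]\to0$, and since $(Y_k,Y)$ is a coupling of $\mu_k$ and $\mu$, $W_p(\mu_k,\mu)^p\le\Eb\big[d(Y_k,Y)^p\big]\to0$. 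This is the substantive half of the equivalence; the point I expect to matter most is pairing Skorokhod's representation with a uniform-integrability argument to upgrade almost sure convergence to convergence in $\Lc^1$ — plain weak convergence is not enough because $d(x_0,\cdot)^p$ is unbounded, so the moment input is essential.

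For separability, fix a countable dense set $\{a_i\}\subset\Xc$ and let $\Dc$ be the countable collection of finite convex combinations of Diracs $\delta_{a_i}$ with rational weights. Given $\mu\in\Pc_p(\Xc)$ and $\varepsilon>0$, first discard the $m_p$-tail of $\mu$ outside a large closed ball $\overline B(x_0,R)$ (possible since $m_p(\mu)<\infty$), partition $\overline B(x_0,R)$ into finitely many Borel pieces of diameter $<\varepsilon$, transport the mass of each piece — and the discarded tail mass — to a point $a_i$ within $\varepsilon$ of it, and finally perturb the resulting weights to rationals; the induced transport plan moves mass by at most $\varepsilon$ except on the tail, so the element of $\Dc$ so obtained lies within $O(\varepsilon)$ of $\mu$ in $W_p$. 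Hence $\Dc$ is $W_p$-dense.

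For completeness, let $(\mu_k)$ be $W_p$-Cauchy; being $W_p$-bounded, $\sup_k m_p(\mu_k)=\sup_k W_p(\mu_k,\delta_{x_0})^p<\infty$. First, $(\mu_k)$ is uniformly tight: for each $\ell$ choose $N_\ell$ with $W_p(\mu_k,\mu_{N_\ell})\le\varepsilon 2^{-\ell}$ for $k\ge N_\ell$, take a compact set $K_\ell$ capturing $\mu_1,\dots,\mu_{N_\ell}$ up to $\varepsilon 2^{-\ell}$, and use the optimal plan between $\mu_k$ and $\mu_{N_\ell}$ together with Markov's inequality to bound $\mu_k\big(\Xc\setminus\overline{K_\ell^{\,\delta_\ell}}\big)$ by $O(\varepsilon 2^{-\ell})$ for every $k$, where $\delta_\ell$ (bounded in $\ell$) is chosen so that $W_p(\mu_k,\mu_{N_\ell})^p/\delta_\ell^p\le\varepsilon 2^{-\ell}$; since the closed $\delta_\ell$-neighborhood of a compact, hence totally bounded, set has compact closure, $K:=\bigcap_\ell\overline{K_\ell^{\,\delta_\ell}}$ is compact and $\sup_k\mu_k(\Xc\setminus K)=O(\varepsilon)$. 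By Prokhorov's theorem some subsequence $\mu_{k_j}\rightharpoonup\mu\in\Pc(\Xc)$; lower semicontinuity of $m_p$ under weak convergence gives $m_p(\mu)\le\liminf_j m_p(\mu_{k_j})<\infty$, so $\mu\in\Pc_p(\Xc)$, and lower semicontinuity of $W_p$ under weak convergence of one argument (proved via tightness of the optimal plans and lower semicontinuity of $d^p$) gives $W_p(\mu_{k_j},\mu)\le\liminf_i W_p(\mu_{k_j},\mu_{k_i})\le\varepsilon$ for $j$ large, by the Cauchy property; hence $W_p(\mu_{k_j},\mu)\to0$, and then $W_p(\mu_k,\mu)\le W_p(\mu_k,\mu_{k_j})+W_p(\mu_{k_j},\mu)\to0$. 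Thus $(\Pc_p(\Xc),W_p)$ is a complete, separable metric space, i.e., Polish. The most delicate point of the whole argument is this uniform tightness of a Cauchy sequence, where one must carefully combine the uniform $p$-th moment bound with the fact that neighborhoods of compact sets in a Polish space remain relatively compact.
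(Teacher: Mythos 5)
The paper does not prove this theorem: it is quoted as a classical result from Villani's monograph (Theorems 6.9 and 6.18 there), so there is no in-paper proof to compare against. Your argument is a legitimate self-contained proof, and its route differs from the one the authors actually deploy later for their kernel analogue (Theorem \ref{t:kernel-distance-metric}), whose appendix proof adapts Villani's truncation technique with the cutoff $d(y,y')\wedge R$ and a splitting of the tail. You instead use Skorokhod representation combined with the generalized dominated convergence (Pratt) lemma in both directions of the equivalence; this is cleaner and avoids the $R$-truncation entirely, at the price of invoking Skorokhod, which has no analogue in the kernel setting (where one must handle a family of optimal plans $\pi_k(\cdot,\cdot|x)$ measurably in $x$) — which is presumably why the paper's own later argument does not take your route. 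The metrization equivalence and the separability argument are correct as written.

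There is one genuine flaw, in the completeness part. You assert that ``the closed $\delta_\ell$-neighborhood of a compact, hence totally bounded, set has compact closure.'' This is false in a general Polish space: in an infinite-dimensional separable Hilbert space the closed $\delta$-neighborhood of a single point is a closed ball, which is not compact and not totally bounded. Consequently compactness of $K=\bigcap_\ell\overline{K_\ell^{\,\delta_\ell}}$ does not follow from compactness of any single $\overline{K_\ell^{\,\delta_\ell}}$; it must come from the intersection itself: $K$ is totally bounded because for every $\eta>0$ one can pick $\ell$ with $\delta_\ell<\eta/2$ and cover $K\subset K_\ell^{\,\delta_\ell}$ by finitely many balls of radius $\eta$, and a closed totally bounded subset of a complete space is compact. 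This requires $\delta_\ell\to 0$, which your construction does not guarantee: with $W_p(\mu_k,\mu_{N_\ell})\le\varepsilon 2^{-\ell}$ the constraint $W_p^p/\delta_\ell^p\le\varepsilon2^{-\ell}$ forces only $\delta_\ell\ge(\varepsilon2^{-\ell})^{(p-1)/p}$, which for $p=1$ pins $\delta_\ell\equiv 1$. The repair is easy — use the Cauchy property to choose $N_\ell$ so that $W_p(\mu_k,\mu_{N_\ell})\le(\varepsilon2^{-\ell})^2$ for $k\ge N_\ell$, which permits $\delta_\ell=(\varepsilon2^{-\ell})^{(2p-1)/p}\to0$ — but as written the compactness of $K$ is unjustified and the stated justification is wrong.
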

By the triangle inequality, $W_{p}(\cdot,\cdot)$ is continuous on
$\Pc_{p}(\mathcal{X})\times \Pc_{p}(\mathcal{X})$.

\section{The Integrated Transportation Distance Between Kernels}
\label{s:kernel-distance}

We now introduce an essential concept in our research: the integrated transportation distance between kernels.

Suppose $\Xc$ and $\Yc$ are Polish spaces.  By the measure disintegration formula, every probability measure $\mu\in \Pc(\Xc\times\Yc)$ admits a disintegration
$\mu = {\lambda} \circledast Q$, where ${\lambda} \in \Pc(\Xc)$ is the marginal distribution on $\Xc$, and $Q:\Xc \to \Pc(\Yc)$ is a \emph{kernel} (a function
such that for each $B\in \Bc(\Yc)$ the mapping $x\mapsto Q(B|x)$ is Borel measurable):
\[
\mu(A \times B) = \int_A  Q(B|x)\;{\lambda}({\D x}), \quad \forall \big(A\in \Bc(\Xc)\big),\;\forall \big(B \in \Bc(\Yc)\big).
\]
Conversely, given a marginal ${\lambda} \in \Pc(\Xc)$  and a kernel $Q:\Xc \to \Pc(\Yc)$, the above formula defines a probability measure ${\lambda} \circledast Q$
on $\Xc \times \Yc$.
Its marginal on $\Yc$ is the \emph{mixture distribution} $\lambda \circ Q$ given by
\[
(\lambda \circ Q)(B) = \int_\Xc  Q(B|x)\;{\lambda}({\D x}), \quad \forall \, B \in \Bc(\Yc).
\]
We intend to define a distance between kernels with the use of the Wasserstein metric in the space of probability measures.
To this end, we restrict the class of kernels under consideration. We use the same symbol $d(\cdot,\cdot)$ to denote the metrics on $\Xc$ and $\Yc$; the space will be clear from the context.

\begin{definition}
\label{d:kernel-space}
The kernel space of order $p\in [1,\infty)$  is the set
\begin{multline}
\label{kernel-class}
\Qc_p(\Xc,\Yc) = \Big\{ Q:\Xc \to \Pc_p(\Yc) : \forall \big(B\in \Bc(\Yc)\big) \; Q(B|\cdot) \text{ is Borel measurable},\\
\exists (C>0)\,{  \forall(x\in \Xc)} \int_\Yc d(y,y_0)^p\; Q({\D y}|x) \le C\big(1 + d(x,x_0)^p\big)\Big\}.
\end{multline}
\end{definition}
It is evident that the choice of the points $x_0\in \Xc$ and $y_0\in \Yc$ is irrelevant in this definition.

\begin{definition}
\label{d:kernel-distance}
The integrated transportation distance of degree $p$ between two kernels $Q$ and $\widetilde{Q}$ in $\Qc_p(\Xc,\Yc)$ with fixed marginal ${\lambda}\in \Pc_p(\Xc)$ is defined as
\begin{equation} \label{TS}
    \Wc_{p}^\lambda(Q, \widetilde{Q})=\left(\int_{\Xc} \big[{W}_{p}(Q(\cdot | x), \widetilde{Q}(\cdot | x))\big]^p \;{\lambda}(\D x)\right)^{1/p} .
\end{equation}
\end{definition}
From now on, for a fixed marginal $\lambda\in \Pc_p(\Xc)$, we shall identify the kernels $Q$ and $\widetilde{Q}$
if ${W}_{p}(Q(\cdot | x), \widetilde{Q}(\cdot | x))=0$ for $\lambda$-almost all $x\in \Xc$. Thus, we consider the
space $\Qc_p^\lambda(\Xc,\Yc)$ of equivalence classes of $\Qc_p(\Xc,\Yc)$.

\begin{theorem} \label{TS_T}
For any $p\in [1,\infty)$ and any $\lambda\in \Pc_p(\Xc)$, the function $\Wc_p^\lambda(\cdot,\cdot)$, defines a  metric on the space $\Qc_p^\lambda(\Xc,\Yc)$.
\end{theorem}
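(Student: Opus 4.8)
The plan is to verify the three metric axioms — nonnegativity with the identity of indiscernibles, symmetry, and the triangle inequality — for $\Wc_p^\lambda$ on the quotient space $\Qc_p^\lambda(\Xc,\Yc)$, using as a black box the fact (Theorem~\ref{t:metrization} and the surrounding discussion) that $W_p$ is a genuine metric on $\Pc_p(\Yc)$. Before any of that, I would check that $\Wc_p^\lambda(Q,\widetilde Q)$ is well defined and finite: the integrand $x\mapsto \big[W_p(Q(\cdot|x),\widetilde Q(\cdot|x))\big]^p$ must be $\lambda$-measurable, and the integral must be finite for kernels in $\Qc_p(\Xc,\Yc)$. Finiteness follows from the growth condition in Definition~\ref{d:kernel-space}: by the triangle inequality in $\Pc_p(\Yc)$ and the elementary bound $W_p(\mu,\delta_{y_0})^p=\int d(y,y_0)^p\,\mu(\D y)$, one gets $W_p(Q(\cdot|x),\widetilde Q(\cdot|x))^p \le 2^{p-1}\big(\int d(y,y_0)^p Q(\D y|x)+\int d(y,y_0)^p\widetilde Q(\D y|x)\big)\le 2^p C(1+d(x,x_0)^p)$, which is $\lambda$-integrable since $\lambda\in\Pc_p(\Xc)$.

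Measurability of $x\mapsto W_p(Q(\cdot|x),\widetilde Q(\cdot|x))$ is the step I expect to require the most care, and I would treat it as the main obstacle. The function $(\mu,\nu)\mapsto W_p(\mu,\nu)$ is continuous (hence Borel) on $\Pc_p(\Yc)\times\Pc_p(\Yc)$ by the remark following Theorem~\ref{t:metrization}. So it suffices to know that $x\mapsto Q(\cdot|x)$ is a Borel map from $\Xc$ into $(\Pc_p(\Yc),W_p)$. For $p=1$ this is immediate from the Kantorovich–Rubinstein formula (Theorem~\ref{t:KR}), which writes $W_1$ as a supremum over a countable dense family of Lipschitz test functions of quantities $x\mapsto \int\psi\,dQ(\cdot|x)-\int\psi\,d\widetilde Q(\cdot|x)$, each Borel by the definition of a kernel. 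For general $p$ I would instead invoke the standard fact that the Borel $\sigma$-field on $\Pc_p(\Yc)$ (with the $W_p$-topology) coincides with the one generated by the evaluation maps $\mu\mapsto\mu(B)$, $B\in\Bc(\Yc)$ — equivalently, that weak-$\Pc_p$ convergence is generated by a countable family of test integrals — so that $x\mapsto Q(\cdot|x)$, which is measurable into $\Pc(\Yc)$ by the kernel property, is automatically measurable into $\Pc_p(\Yc)$; then composition with the continuous $W_p$ gives the claim. Alternatively one can represent $W_p^p(Q(\cdot|x),\widetilde Q(\cdot|x))$ as an infimum of $\int d(y,y')^p\,\pi(\D y,\D y'|x)$ over kernels $\pi$ with the prescribed conditional marginals and appeal to a measurable selection theorem, but the composition argument is cleaner.

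With well-definedness in hand, the axioms are inherited pointwise from $W_p$ and then integrated. Symmetry is immediate since $W_p(Q(\cdot|x),\widetilde Q(\cdot|x))=W_p(\widetilde Q(\cdot|x),Q(\cdot|x))$ for every $x$. Nonnegativity is clear, and $\Wc_p^\lambda(Q,\widetilde Q)=0$ forces $W_p(Q(\cdot|x),\widetilde Q(\cdot|x))=0$ for $\lambda$-almost every $x$, hence $Q(\cdot|x)=\widetilde Q(\cdot|x)$ $\lambda$-a.e. because $W_p$ is a metric on $\Pc_p(\Yc)$; this is exactly the statement that $Q=\widetilde Q$ as elements of the quotient $\Qc_p^\lambda(\Xc,\Yc)$, which is why passing to equivalence classes is necessary. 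For the triangle inequality, fix $Q,\widetilde Q,\widehat Q$; applying the triangle inequality for $W_p$ at each $x$ gives $W_p(Q(\cdot|x),\widehat Q(\cdot|x))\le W_p(Q(\cdot|x),\widetilde Q(\cdot|x))+W_p(\widetilde Q(\cdot|x),\widehat Q(\cdot|x))$, and taking the $L^p(\lambda)$ norm of both sides, together with Minkowski's inequality in $L^p(\lambda)$, yields $\Wc_p^\lambda(Q,\widehat Q)\le\Wc_p^\lambda(Q,\widetilde Q)+\Wc_p^\lambda(\widetilde Q,\widehat Q)$. This completes the verification that $\Wc_p^\lambda$ is a metric.
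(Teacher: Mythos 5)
Your proposal is correct and follows essentially the same route as the paper: finiteness via the growth condition in Definition~\ref{d:kernel-space} (the paper bounds $\Wc_p^\lambda(Q,\delta_{\{y_0\}})$ and uses the triangle inequality, which is your $2^{p-1}$ estimate in disguise), the identity of indiscernibles from $W_p$ being a metric on $\Pc_p(\Yc)$ together with the passage to equivalence classes, and the triangle inequality by combining the pointwise triangle inequality for $W_p$ with Minkowski's inequality in $L^p(\lambda)$. The only difference is that you explicitly address the $\lambda$-measurability of $x\mapsto W_p(Q(\cdot|x),\widetilde Q(\cdot|x))$, a point the paper's proof leaves implicit; your argument for it is sound.
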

{  The proof is provided in the Appendix.}

{
\begin{remark}
    \label{r:kern}
{\rm Our construction of the kernel space \eqref{kernel-class} and the metric  \eqref{TS} are related to the ideas used in \cite{mirkov2007tree} for scenario trees, and
refined in \cite[\S 3]{kern2020first} for Markov systems. In our notation, the authors of \cite{kern2020first}
propose the metric
\[
\Db_{p}(Q, \widetilde{Q})=\sup_{x\in \Xc} \frac{1}{\psi(x)}{W}_{p}(Q(\cdot | x), \widetilde{Q}(\cdot | x)), \]
with a gauge function $\psi:\Xc\to [1,\infty).$ If $\psi(\cdot)\equiv 1$ we have
$\Wc_{p}^\lambda(Q, \widetilde{Q}) \le  \Db_{p}(Q, \widetilde{Q}) $.
The uniformity (relative to the gauge function)
of the approximation over all states $x\in \Xc$ is most suitable for situations when nothing is known about the
distribution of $x$. In our approximation method in the next section,
the marginal $\lambda$ is not arbitrary, but
it closely approximates the marginal distribution of the state in the original system. Thanks to that, the use of the metric
\eqref{TS} allows for controlling the propagation of errors in the backward system \eqref{DP-risk-finite}.
It also eliminates the need to work with gauge functions in unbounded spaces.
}
\end{remark}
}

For a kernel $Q \in \Qc_p(\Xc,\Yc)$, and every $\lambda \in \Pc_p(\Xc)$ the measure $\lambda \circ Q$ is an element of $\Pc_p(\Yc)$, because
\begin{multline*}
\int_\Yc d(y,y_0)^p\; (\lambda\circ Q)({\D y}) = \int_\Xc \int_\Yc d(y,y_0)^p\;\; Q({\D y}|x)\;\lambda({\D x}) \\
 \le C(Q) \int_\Xc  \big(1 + d(x,x_0)^p\big)\;\lambda({\D x}) < \infty.
\end{multline*}
In a similar way, the measure ${\lambda} \circledast Q \in \Pc_p(\Xc\times\Yc)$, because
\begin{multline*}
\int_\Xc\int_\Yc \big[d(x,x_0)^p+d(y,y_0)^p\big]\;  Q({\D y}|x)\;\lambda({\D x}) \\
= \int_\Xc \bigg[d(x,x_0)^p+ \int_\Yc d(y,y_0)^p\;\; Q({\D y}|x)\bigg]\;\lambda({\D x}) \\
 \le (C(Q)+1) \int_\Xc  \big(1 + d(x,x_0)^p\big)\;\lambda({\D x}) < \infty.
\end{multline*}

The integrated transportation distance provides an upper bound on the distances between two mixture distributions and between two composition distributions.

\begin{theorem} \label{TS_T2}
For all $\lambda \in \Pc_p(\Xc)$ and all $Q ,  \widetilde{Q} \in \Qc_p^\lambda(\Xc,\Yc)$,
\begin{equation}
\label{hierarchy}
\Wc_p^\lambda(Q,\widetilde{Q}) \geq {W}_p({\lambda} \circledast Q,{\lambda} \circledast \widetilde{Q}) \geq
{W}_p({\lambda} \circ Q,{\lambda} \circ \widetilde{Q}).
\end{equation}
\end{theorem}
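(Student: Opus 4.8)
The plan is to prove the two inequalities in \eqref{hierarchy} separately, in each case by producing one explicit coupling and invoking the variational definition \eqref{Wass} of the Wasserstein distance.

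\textbf{Step 1 (measurability and choice of fiberwise couplings).} First I would record that the hypothesis in Definition~\ref{d:kernel-space} makes $x\mapsto Q(\cdot\mid x)\in\Pc_p(\Yc)$ a Borel map, so $x\mapsto W_p\big(Q(\cdot\mid x),\widetilde Q(\cdot\mid x)\big)$ is Borel and the integral in \eqref{TS} is meaningful. Then, for $\lambda$-a.e.\ $x\in\Xc$, I would pick an optimal coupling $\pi_x\in\Pi\big(Q(\cdot\mid x),\widetilde Q(\cdot\mid x)\big)$, whose existence is recalled right after Definition~\ref{Wass_d1}. The delicate point is that $\{\pi_x\}$ can be chosen to depend measurably on $x$: the multifunction $x\mapsto\Pi\big(Q(\cdot\mid x),\widetilde Q(\cdot\mid x)\big)$ has weakly compact values and a measurable graph, and the cost $(x,\pi)\mapsto\int_{\Yc\times\Yc}d(y,y')^p\,\pi(\D y,\D y')$ is jointly measurable and lower semicontinuous in $\pi$; a measurable-selection (``measurable maximum'') theorem then furnishes a Borel map $x\mapsto\pi_x$ with each $\pi_x$ optimal.

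\textbf{Step 2 (first inequality).} I would glue the $\pi_x$ along the diagonal in the $\Xc$-coordinate: define $\gamma\in\Pc\big((\Xc\times\Yc)\times(\Xc\times\Yc)\big)$ by
\[
\int f\,\D\gamma \;=\; \int_\Xc\int_{\Yc\times\Yc} f\big(x,y,x,y'\big)\,\pi_x(\D y,\D y')\,\lambda(\D x).
\]
Since the marginals of $\pi_x$ are $Q(\cdot\mid x)$ and $\widetilde Q(\cdot\mid x)$, a direct marginal computation gives $\gamma\in\Pi\big(\lambda\circledast Q,\lambda\circledast\widetilde Q\big)$. Equipping $\Xc\times\Yc$ with the metric $\big(d(x,x')^p+d(y,y')^p\big)^{1/p}$ (the one implicit in the paper's verification that $\lambda\circledast Q\in\Pc_p(\Xc\times\Yc)$), the transport cost of $\gamma$ is
\[
\int\big(d(x,x')^p+d(y,y')^p\big)\,\D\gamma
= \int_\Xc\int_{\Yc\times\Yc} d(y,y')^p\,\pi_x(\D y,\D y')\,\lambda(\D x)
= \int_\Xc W_p\big(Q(\cdot\mid x),\widetilde Q(\cdot\mid x)\big)^p\,\lambda(\D x),
\]
which equals $\Wc_p^\lambda(Q,\widetilde Q)^p$. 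As $W_p(\lambda\circledast Q,\lambda\circledast\widetilde Q)^p$ is the infimum of such costs over $\Pi\big(\lambda\circledast Q,\lambda\circledast\widetilde Q\big)$, the first inequality follows after taking $p$-th roots.

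\textbf{Step 3 (second inequality) and the main obstacle.} Here I would observe that $\lambda\circ Q$ and $\lambda\circ\widetilde Q$ are the images of $\lambda\circledast Q$ and $\lambda\circledast\widetilde Q$ under the projection $\proj_\Yc:(x,y)\mapsto y$, which is $1$-Lipschitz from $\big(\Xc\times\Yc,(d(\cdot,\cdot)^p+d(\cdot,\cdot)^p)^{1/p}\big)$ to $(\Yc,d)$ because $d(y,y')\le\big(d(x,x')^p+d(y,y')^p\big)^{1/p}$. Hence for any $\gamma\in\Pi\big(\lambda\circledast Q,\lambda\circledast\widetilde Q\big)$ the pushforward $(\proj_\Yc\times\proj_\Yc)_{\#}\gamma$ lies in $\Pi(\lambda\circ Q,\lambda\circ\widetilde Q)$ and has cost $\int d(y,y')^p\,\D\gamma\le\int\big(d(x,x')^p+d(y,y')^p\big)\,\D\gamma$; taking the infimum over $\gamma$ yields $W_p(\lambda\circ Q,\lambda\circ\widetilde Q)\le W_p(\lambda\circledast Q,\lambda\circledast\widetilde Q)$. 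The only step that is not a routine application of the gluing lemma and of monotonicity of $W_p$ under $1$-Lipschitz maps is the measurable selection of $x\mapsto\pi_x$ in Step~1. If one prefers to avoid selection theorems, one can select for fixed $\varepsilon>0$ from the (measurable, nonempty-valued) multifunction of $\varepsilon$-optimal couplings and let $\varepsilon\downarrow0$; and in the finite-support setting of \S\ref{s:particles} the issue disappears, since there $x\mapsto\pi_x$ may be taken piecewise constant.
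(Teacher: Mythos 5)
Your proposal is correct and follows essentially the same route as the paper: for the first inequality you glue the fiberwise optimal couplings $\pi_x$ along the diagonal in $\Xc$ (the paper's plan $\varPi^*$ in \eqref{pi-star}), and for the second you push an optimal coupling of $\lambda\circledast Q$ and $\lambda\circledast\widetilde Q$ forward under the $\Yc$-projections (the paper's $\hat\pi$). Your explicit attention to the measurable selection of $x\mapsto\pi_x$ and to the product metric on $\Xc\times\Yc$ only makes precise what the paper states in passing.
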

{  The proof is provided in the Appendix.}

The inequalities in Theorem \ref{TS_T2} may be strict, as illustrated in the example of
$\Xc=\{0,\varepsilon\}$ with $\varepsilon \in (0,1)$, $\Yc=\{0,1\}$, $\lambda = (1/2,1/2)$, $Q(\cdot|x) = \delta_{\{\text{\rm sign}(x)\}}$,
and $\widetilde{Q}(\cdot|x) = \delta_{\{1-\text{\rm sign}(x)\}}$, in which $\Wc_{p}^\lambda(Q, \widetilde{Q})=1$, ${W}_p({\lambda} \circ Q,{\lambda} \circ \widetilde{Q})=0$,
and ${W}_p({\lambda} \circledast Q,{\lambda} \circledast \widetilde{Q}) = \varepsilon$.

We  can define a topology of weak convergence in the space $\Qc_p^\lambda(\Xc,\Yc)$.
\begin{definition}
\label{d:kernel-convergence}
The sequence of kernels $\{Q_k\}$ converges weakly to $Q$ in $\Qc_p^\lambda(\Xc,\Yc)$,
{  where $\lambda \in \Pc_p(\Xc)$}, if for every {  continuous}  function $f:\Xc \times \Yc \to \Rb$ such that
${  |f(x,y)|} \le 1 + d\left(y_{0}, y\right)^{p}$, $\forall\,(x\in \Xc,\,y\in \Yc)$,
\[
\int_\Xc \int_\Yc f(x,y)\,  Q_k({\D y}|x)\; \lambda({\D x}) \longrightarrow  \int_\Xc \int_\Yc f(x,y)\,  Q({\D y}|x)\; \lambda({\D x}).
\]
\end{definition}
This entails that $\lambda \circledast Q_k  \rightharpoonup \lambda \circledast Q$, and, due to Definition \ref{d:villani-def-1}(i),
$\lambda \circ Q^k \overset{p}\to  \lambda \circ Q$.
{  The latter property is essential to our approximation scheme, because it allows us to derive the convergence of integrals or
other functionals of the mixture distributions in the space $\Pc_p(\Xc)$.}
 It also implies that
$\Wc_p^\lambda(Q_k,\delta_{\{y_0\}}) \to \Wc_p^\lambda(Q,\delta_{\{y_0\}})$ (see  \eqref{dist-to-delta} in the Appendix).

The  distance $\Wc_p^\lambda(\cdot,\cdot)$ metrizes the topology of weak convergence in $\Qc_p^\lambda(\Xc,\Yc)$.
\begin{theorem}
\label{t:kernel-distance-metric}
Let $\Xc$ and $\Yc$ be Polish spaces, $p\in [1,\infty)$, and $\lambda\in \Pc_P(\Xc)$. Then the following
statements are equivalent: {\rm (i)}
$Q_k \to Q$ weakly in $\Qc_p^\lambda(\Xc,\Yc)$; {\rm (ii)}
$\Wc_p^\lambda(Q_k ,Q) \to 0$.
\end{theorem}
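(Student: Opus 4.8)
The plan is to mirror the classical metrization argument for the Wasserstein distance (Theorem~\ref{t:metrization}) but carried out ``fiberwise'' in $x$ and then integrated against $\lambda$. The implication (ii)$\Rightarrow$(i) is the easy direction: given a continuous $f:\Xc\times\Yc\to\Rb$ with $|f(x,y)|\le 1+d(y_0,y)^p$, for each fixed $x$ the function $y\mapsto f(x,y)$ is admissible in Definition~\ref{d:villani-def-1}, so one would like to say that $W_p(Q_k(\cdot|x),Q(\cdot|x))\to 0$ implies $\int_\Yc f(x,y)\,Q_k(\D y|x)\to \int_\Yc f(x,y)\,Q(\D y|x)$ pointwise in $x$, and then pass the limit under $\int_\Xc(\cdot)\,\lambda(\D x)$. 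The subtlety is that convergence $\Wc_p^\lambda(Q_k,Q)\to 0$ only gives convergence of $x\mapsto W_p(Q_k(\cdot|x),Q(\cdot|x))$ to $0$ in $L^p(\lambda)$, hence only along a subsequence $\lambda$-a.e.; but since it suffices to show every subsequence has a further subsequence along which the integrals converge (and the candidate limit is the same), this is enough. To push the fiberwise convergence through the outer integral I would use a uniform integrability / domination argument: bound $\big|\int_\Yc f(x,y)\,Q_k(\D y|x)\big|\le 1+\int_\Yc d(y_0,y)^p\,Q_k(\D y|x)$ and control the right-hand side using that $W_p(Q_k(\cdot|x),Q(\cdot|x))\to0$ forces $\int d(y_0,\cdot)^p\,dQ_k(\cdot|x)\to\int d(y_0,\cdot)^p\,dQ(\cdot|x)$ fiberwise, together with $L^p(\lambda)$-convergence of the $p$-th moments $x\mapsto\big(\int d(y_0,y)^p\,Q_k(\D y|x)\big)$, which follows from $\Wc_p^\lambda(Q_k,Q)\to0$ via the reverse triangle inequality in $W_p$ and $\Wc_p^\lambda(Q,\delta_{\{y_0\}})<\infty$. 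A generalized dominated convergence theorem (Pratt's lemma) then closes this direction.

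For (i)$\Rightarrow$(ii), the cleaner route is by contradiction combined with the Kantorovich dual. Suppose $\Wc_p^\lambda(Q_k,Q)\not\to0$; pass to a subsequence with $\big(\int_\Xc W_p(Q_k(\cdot|x),Q(\cdot|x))^p\,\lambda(\D x)\big)^{1/p}\ge\varepsilon>0$ for all $k$. The goal is to extract a further subsequence along which $W_p(Q_k(\cdot|x),Q(\cdot|x))\to0$ for $\lambda$-a.e.\ $x$ while staying dominated in $L^p(\lambda)$, which contradicts the assumption via dominated convergence. To get the fiberwise convergence I would first show that weak convergence of $\{Q_k\}$ in $\Qc_p^\lambda$ (testing against functions $f(x,y)=g(x)h(y)$ with $g$ bounded continuous and $h$ admissible) implies, for a.e.\ $x$, that $Q_k(\cdot|x)\overset{p}\to Q(\cdot|x)$ — this is again a ``disintegration of weak convergence'' statement and the main technical obstacle, since pointwise-in-$x$ weak convergence of kernels is not automatic from convergence of the integrated measures; it requires a careful subsequence/diagonal argument using a countable convergence-determining family of test functions $h$ on the Polish space $\Yc$ and a countable dense family of $g$'s. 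Once fiberwise $\overset{p}\to$ convergence is in hand, Theorem~\ref{t:metrization} gives $W_p(Q_k(\cdot|x),Q(\cdot|x))\to0$ for a.e.\ $x$.

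The remaining ingredient is the $L^p(\lambda)$-domination needed to integrate this fiberwise convergence. I would establish it by controlling $\int_\Xc \big(\int_\Yc d(y_0,y)^p\,Q_k(\D y|x)\big)\,\lambda(\D x)=\int_\Yc d(y_0,y)^p\,(\lambda\circ Q_k)(\D y)$, which converges to $\int_\Yc d(y_0,y)^p\,(\lambda\circ Q)(\D y)$ because weak convergence in $\Qc_p^\lambda$ implies $\lambda\circ Q_k\overset{p}\to\lambda\circ Q$ (noted right after Definition~\ref{d:kernel-convergence}) and $\overset{p}\to$-convergence entails convergence of $p$-th moments. Convergence of the $L^1(\lambda)$-norms of the nonnegative functions $x\mapsto\int d(y_0,y)^p\,Q_k(\D y|x)$, together with their $\lambda$-a.e.\ convergence (from fiberwise $\overset{p}\to$), yields uniform integrability of this family; combined with $W_p(Q_k(\cdot|x),Q(\cdot|x))^p\le 2^{p-1}\big(\int d(y_0,y)^p dQ_k(\cdot|x)+\int d(y_0,y)^p dQ(\cdot|x)\big)$, this makes $x\mapsto W_p(Q_k(\cdot|x),Q(\cdot|x))^p$ uniformly integrable. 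A uniformly integrable sequence converging $\lambda$-a.e.\ to $0$ converges to $0$ in $L^1(\lambda)$ (Vitali), i.e.\ $\Wc_p^\lambda(Q_k,Q)\to0$, the desired contradiction. The main obstacle, as flagged, is the disintegration step: transferring weak convergence of the glued measures $\lambda\circledast Q_k$ to $\lambda$-a.e.\ weak convergence of the fibers $Q_k(\cdot|x)$; everything else is a bookkeeping exercise with Pratt/Vitali and the already-established Theorems~\ref{t:metrization} and~\ref{TS_T2}.
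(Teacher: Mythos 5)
Your direction (ii)$\Rightarrow$(i) is correct but takes a much longer road than the paper: the paper simply invokes Theorem~\ref{TS_T2} to get $W_p(\lambda\circledast Q_k,\lambda\circledast Q)\le \Wc_p^\lambda(Q_k,Q)\to 0$, hence $\lambda\circledast Q_k\overset{p}\to\lambda\circledast Q$ by the classical metrization theorem, and then notes that every test function in Definition~\ref{d:kernel-convergence} is admissible for that convergence. Your fiberwise route (a.e.\ convergence along subsequences plus Pratt's lemma with the moment functions $x\mapsto\int d(y_0,y)^p\,Q_k(\D y|x)$ as dominating sequence) is workable, just heavier.

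The genuine gap is in (i)$\Rightarrow$(ii), at exactly the step you flag as ``the main technical obstacle'' and then defer to ``a careful subsequence/diagonal argument.'' That argument does not exist. Testing against $f(x,y)=g(x)h(y)$ only shows that the functions $u_k(x)=\int_\Yc h(y)\,Q_k(\D y|x)$ converge to $u(x)=\int_\Yc h(y)\,Q(\D y|x)$ \emph{weakly} in $L^1(\lambda)$, and weak convergence of uniformly bounded functions does not give a.e.\ convergence along any subsequence, so no countable family of $h$'s and $g$'s can upgrade weak convergence of the glued measures to $\lambda$-a.e.\ weak convergence of the fibers. Concretely: let $\Xc=[0,1]$ with Lebesgue $\lambda$, $\Yc=\{0,1\}$ with the discrete metric, $Q_k(\cdot|x)=\delta_{\1_{\{\sin(2\pi kx)>0\}}}$, and $Q(\cdot|x)=\tfrac12(\delta_0+\delta_1)$. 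Then (i) holds by a Riemann--Lebesgue argument, yet $W_1(Q_k(\cdot|x),Q(\cdot|x))=\tfrac12$ for every $x$ and $k$, so no subsequence of fibers converges anywhere. The oscillation of $x\mapsto Q_k(\cdot|x)$ across fibers is the whole difficulty of this direction, and your plan leaves it unaddressed.

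For what it is worth, the paper's own proof makes the identical unproved assertion (``it follows that $Q_k(\cdot|x)\rightharpoonup Q(\cdot|x)$ for $\lambda$-almost all $x$'') before running the truncation argument adapted from Villani's Theorem~6.9, so completing your outline as written would only reproduce the paper's step, not justify it. Your remaining machinery --- the reverse triangle inequality for the moments, Scheff\'e/Vitali uniform integrability, and the bound $W_p^p\le 2^{p-1}(\cdots)$ --- is sound and would close the direction \emph{if} a.e.\ fiberwise convergence were available; what is missing is an argument (or an extra hypothesis, such as equicontinuity of the kernels in $x$) that rules out fiber oscillation.
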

{  The proof is provided in the Appendix.}

By the triangle inequality, we obtain the following corollary.
\begin{corollary}
\label{c:continuity-kernel-dist}
The functional $\Wc_p^\lambda(\cdot,\cdot)$ is continuous with respect to the weak convergence in the space
$\Qc_p^\lambda(\Xc,\Yc) \times \Qc_p^\lambda(\Xc,\Yc)$.
\end{corollary}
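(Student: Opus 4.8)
The plan is to reduce joint continuity to the two one-sided convergences already established, so the argument is essentially the one announced in the text: ``by the triangle inequality.'' Fix $\lambda\in\Pc_p(\Xc)$ and suppose $Q_k\to Q$ and $\widetilde{Q}_k\to\widetilde{Q}$ weakly in $\Qc_p^\lambda(\Xc,\Yc)$. Since the product topology on $\Qc_p^\lambda(\Xc,\Yc)\times\Qc_p^\lambda(\Xc,\Yc)$ is generated coordinatewise, a sequence in the product space converges exactly when both coordinate sequences converge; hence it suffices to show $\Wc_p^\lambda(Q_k,\widetilde{Q}_k)\to\Wc_p^\lambda(Q,\widetilde{Q})$. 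By Theorem \ref{TS_T}, $\Wc_p^\lambda(\cdot,\cdot)$ is a genuine metric on $\Qc_p^\lambda(\Xc,\Yc)$, so the reverse triangle inequality applies:
\[
\bigl|\Wc_p^\lambda(Q_k,\widetilde{Q}_k)-\Wc_p^\lambda(Q,\widetilde{Q})\bigr|\le \Wc_p^\lambda(Q_k,Q)+\Wc_p^\lambda(\widetilde{Q}_k,\widetilde{Q}).
\]

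Next I would invoke Theorem \ref{t:kernel-distance-metric}: weak convergence $Q_k\to Q$ in $\Qc_p^\lambda(\Xc,\Yc)$ is equivalent to $\Wc_p^\lambda(Q_k,Q)\to 0$, and likewise $\Wc_p^\lambda(\widetilde{Q}_k,\widetilde{Q})\to 0$. Substituting these into the displayed inequality forces the left-hand side to tend to $0$, which is precisely the asserted continuity along sequences. Finally, since Theorem \ref{t:kernel-distance-metric} identifies the weak topology on $\Qc_p^\lambda(\Xc,\Yc)$ with the metric topology induced by $\Wc_p^\lambda$, that topology is metrizable; therefore sequential continuity of $\Wc_p^\lambda(\cdot,\cdot)$ (which is what we have just verified) coincides with topological continuity, and Corollary \ref{c:continuity-kernel-dist} follows.

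There is essentially no obstacle here: the whole content of the corollary is already packaged in Theorems \ref{TS_T} and \ref{t:kernel-distance-metric}. The only point deserving a word of care is the passage from sequential to topological continuity, which is legitimate precisely because of the metrizability supplied by Theorem \ref{t:kernel-distance-metric}; if one prefers, one may simply phrase the corollary in its sequential form and omit this last remark.
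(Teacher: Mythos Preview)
Your proof is correct and follows exactly the approach indicated in the paper: the corollary is stated as a direct consequence of the triangle inequality (from Theorem~\ref{TS_T}) combined with Theorem~\ref{t:kernel-distance-metric}, and that is precisely what you carry out via the reverse triangle inequality. Your additional remark on metrizability justifying the passage from sequential to topological continuity is a welcome clarification that the paper leaves implicit.
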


{
We can also establish an extension of the Kantorovich--Rubinstein duality.
\begin{theorem}
    \label{t:KR-kernels}
    For all $Q,\widetilde{Q}\in \Qc_1^\lambda(\Xc,\Yc)$ we have
    \begin{equation}
    \label{KR-kernels}
\begin{aligned}
\Wc_1^\lambda(Q,\widetilde{Q}) &= \sup_{f(\cdot,\cdot)\in F}\left\{\int_{\mathcal{X}\times \mathcal{Y}} f(x,y)\; (\lambda \circledast Q)(\D x\,\D y)-\int_{\mathcal{X}\times \mathcal{Y}} f(x,y) (\lambda \circledast \widetilde{Q})(\D x\,\D y)\right\},
\end{aligned}
\end{equation}
where $F$ is the set of measurable functions on $\Xc \times \Yc$ such that $\|f(x,\cdot)\|_{\text{\rm Lip}} \leq 1$ for $\lambda$-almost all  $x\in \Xc$.
With no loss of generality, we may also assume that $f(\cdot,y_0)\equiv 0$, for all $f\in F$.
\end{theorem}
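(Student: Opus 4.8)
The plan is to prove the two inequalities behind \eqref{KR-kernels} separately, the workhorse being the pointwise Kantorovich--Rubinstein duality (Theorem~\ref{t:KR}) applied to the probability measures $Q(\cdot\,|x)$ and $\widetilde Q(\cdot\,|x)$, together with the disintegration identity
\[
\int_{\Xc\times\Yc} f(x,y)\,(\lambda\circledast R)(\D x\,\D y)=\int_{\Xc}\Big(\int_{\Yc} f(x,y)\,R(\D y|x)\Big)\lambda(\D x),\qquad R\in\{Q,\widetilde Q\},
\]
valid whenever the integrand is $(\lambda\circledast R)$-integrable. First I would dispose of the normalization claimed at the end of the statement: replacing $f\in F$ by $(x,y)\mapsto f(x,y)-f(x,y_0)$ leaves $\|f(x,\cdot)\|_{\text{\rm Lip}}$ unchanged and, because $\lambda\circledast Q$ and $\lambda\circledast\widetilde Q$ share the $\Xc$-marginal $\lambda$, it leaves the bracketed difference in \eqref{KR-kernels} unchanged as well; hence it suffices to treat $f\in F$ with $f(\cdot,y_0)\equiv0$, for which $|f(x,y)|\le d(y,y_0)$ and $f$ is consequently integrable against both $\lambda\circledast Q$ and $\lambda\circledast\widetilde Q$, since $Q,\widetilde Q\in\Qc_1^\lambda(\Xc,\Yc)$ (the bound displayed just before Theorem~\ref{TS_T2}).

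For the ``$\le$'' half (the supremum does not exceed $\Wc_1^\lambda$), take any normalized $f\in F$. Theorem~\ref{t:KR} gives, for $\lambda$-almost every $x$,
\[
\int_{\Yc} f(x,y)\,Q(\D y|x)-\int_{\Yc} f(x,y)\,\widetilde Q(\D y|x)\ \le\ W_1\big(Q(\cdot\,|x),\widetilde Q(\cdot\,|x)\big).
\]
Both sides are $\lambda$-integrable Borel functions of $x$ --- the left side by a monotone-class argument reducing to $f$ of product form, the right side by the measurability of $x\mapsto W_1(Q(\cdot|x),\widetilde Q(\cdot|x))$ already needed for Definition~\ref{d:kernel-distance} to make sense --- so integrating against $\lambda$ and using the disintegration identity yields $\int_{\Xc\times\Yc} f\,\D(\lambda\circledast Q)-\int_{\Xc\times\Yc} f\,\D(\lambda\circledast\widetilde Q)\le\Wc_1^\lambda(Q,\widetilde Q)$.

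The reverse inequality is the substantive part, and I would establish it by producing a single measurable ``potential'' $f^\star\in F$ that attains $\Wc_1^\lambda(Q,\widetilde Q)$, via a measurable selection. Fix a countable dense set $\{y_j\}\subset\Yc$ with $y_0$ among the $y_j$, and let $\Lc$ be the set of $1$-Lipschitz functions $\psi:\Yc\to\Rb$ with $\psi(y_0)=0$. The map $\psi\mapsto(\psi(y_j))_j$ identifies $\Lc$ with a closed subset of the compact metrizable product $\prod_j[-d(y_j,y_0),\,d(y_j,y_0)]$ --- equicontinuity makes pointwise convergence on $\{y_j\}$ agree with pointwise convergence on all of $\Yc$ --- so $\Lc$ is compact metrizable. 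Define $g(x,\psi)=\int_{\Yc}\psi\,\D Q(\cdot\,|x)-\int_{\Yc}\psi\,\D\widetilde Q(\cdot\,|x)$ on $\Xc\times\Lc$; it is Borel in $x$ for fixed $\psi$, and continuous in $\psi$ for fixed $x$ (if $\psi_n\to\psi$ in $\Lc$ then $\psi_n\to\psi$ pointwise with $|\psi_n|\le d(\cdot,y_0)$, which lies in $L^1$ of both $Q(\cdot|x)$ and $\widetilde Q(\cdot|x)$, so dominated convergence applies); thus $g$ is a Carath\'eodory function on a product with compact parameter space. By the measurable maximum theorem, $x\mapsto\sup_{\psi\in\Lc}g(x,\psi)$ is measurable and admits a measurable maximizer $x\mapsto\psi^\star_x\in\Lc$; moreover, since $g(x,\cdot)$ is invariant under additive constants, Theorem~\ref{t:KR} identifies $\sup_{\psi\in\Lc}g(x,\psi)=W_1(Q(\cdot|x),\widetilde Q(\cdot|x))$. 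Putting $f^\star(x,y):=\psi^\star_x(y)$ --- jointly measurable because it is measurable in $x$ and continuous in $y$ --- gives $f^\star\in F$ with $f^\star(\cdot,y_0)\equiv0$, and the disintegration identity yields
\[
\int_{\Xc\times\Yc} f^\star\,\D(\lambda\circledast Q)-\int_{\Xc\times\Yc} f^\star\,\D(\lambda\circledast\widetilde Q)=\int_{\Xc} g(x,\psi^\star_x)\,\lambda(\D x)=\int_{\Xc} W_1\big(Q(\cdot|x),\widetilde Q(\cdot|x)\big)\,\lambda(\D x)=\Wc_1^\lambda(Q,\widetilde Q).
\]
The main obstacle is exactly this measurable-selection step: one must arrange the space of $1$-Lipschitz functions to be compact metrizable and verify the Carath\'eodory property of $g$, after which the measurable maximum theorem supplies the jointly measurable potential; everything else is the pointwise duality together with Fubini's theorem for kernels.
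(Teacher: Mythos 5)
Your proposal is correct, and its overall architecture matches the paper's: the easy inequality is obtained exactly as in the paper, by applying the pointwise Kantorovich--Rubinstein duality (Theorem~\ref{t:KR}) for each $x$ and integrating against $\lambda$, and the normalization $f(\cdot,y_0)\equiv 0$ is handled the same way. The difference lies in the hard inequality. The paper works with the multifunction $F_\varepsilon(x)$ of $1$-Lipschitz potentials that are $\varepsilon$-optimal for $W_1\big(Q(\cdot|x),\widetilde{Q}(\cdot|x)\big)$, invokes the measurable selection theorem to get a selector $\varPsi_\varepsilon$, and lets $\varepsilon\downarrow 0$; it does not dwell on the topology placed on $\text{Lip}(\Yc,\Rb)$. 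You instead compactify: identifying the $1$-Lipschitz functions vanishing at $y_0$ with a closed subset of $\prod_j[-d(y_j,y_0),d(y_j,y_0)]$ makes the parameter space compact metrizable, you check that $g(x,\psi)$ is Carath\'eodory (dominated convergence with dominating function $d(\cdot,y_0)$, which is integrable because $Q,\widetilde{Q}\in\Qc_1^\lambda(\Xc,\Yc)$), and the measurable maximum theorem then delivers an \emph{exact} jointly measurable maximizer $f^\star$. Your route is slightly heavier in set-up but yields a stronger conclusion (the supremum in \eqref{KR-kernels} is attained by a single $f^\star\in F$) and is more explicit about the measurability and topological issues that the paper's one-line appeal to "measurable and closed values" leaves implicit; the paper's $\varepsilon$-argument is shorter and avoids any compactness considerations. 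Both are sound.
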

The proof is provided in the appendix.

}

\section{Approximate Risk Evaluation in Markov Systems}
\label{s:abstract-method}

Our objective in this section is to propose and analyze a method for approximating  forward--backward
Markov systems which are described by \eqref{kernel-model}-\eqref{DP-risk-finite},
with the use of the the integrated transportation distance as the criterion for constructing the approximation and a measure of its accuracy.
Throughout this section, the parameter $p\in [1,\infty)$ is fixed.

The method proceeds in stages, for $t=0,1,\dots,T$. At each stage $t$, for all $\tau=0,\dots,t-1$,
we already have approximate transition kernels $\widetilde{Q}_\tau:\Xc \to\Pc_p(\Xc)$, $\tau=0,\dots,t-1$. These kernels define the approximate marginal distribution
\begin{equation}
\label{tilde-lambda}
\widetilde{\lambda}_t = \lambda_0\circ \widetilde{Q}_0 \circ \widetilde{Q}_1 \circ \dots \circ \widetilde{Q}_{t-1} = \widetilde{\lambda}_{t-1}\circ \widetilde{Q}_{t-1}.
\end{equation}
We also have the subspaces $\Xc_\tau\subset \Xc$ as $\Xc_\tau = \text{supp}(\widetilde{\lambda}_\tau)$, $\tau=0,1,\dots,t$.
For $t=0$, $\widetilde{\lambda}_0=\lambda_0$, and
$\Xc_0=\supp{\lambda_0}$.

At the stage $t$, we construct a kernel $\widetilde{Q}_t:\Xc_t \to \Pc_p(\Xc)$ such that
\begin{equation}
\label{Delta}
\Wc_p^{\widetilde{\lambda}_t}(Q_t,\widetilde{Q}_t)\le \varDelta_t.
\end{equation}
If $t< T-1$, we increase $t$ by one, and continue; otherwise, we stop.
Observe that the approximate marginal distribution $\widetilde{\lambda}_t$ is well-defined at each step of this abstract scheme.

We then solve the approximate version of the risk evaluation algorithm \eqref{DP-risk-finite}, with the true kernels $Q_t$
replaced by the approximate kernels $\widetilde{Q}_t$, $t=0,\dots,T-1$:
\begin{equation}
\label{DP-risk-approximate}
\widetilde{v}_t(x) = c_t(x) +  \sigma_t\big(x,\widetilde{Q}_t(x), \widetilde{v}_{t+1}(\cdot)\big),\quad x\in \Xc_t, \quad t=0,1,\dots,T-1;
\end{equation}
we assume that $\widetilde{v}_T(\cdot)\equiv v_T(\cdot)\equiv c_T(\cdot)$.

Our plan is to estimate the error of this evaluation in terms of the kernel errors $\varDelta_t$.
For this purpose, we make the following general assumptions.
\begin{description}
\item[(A1)] For every $t=0,1,\dots,T-1$ and for every $x\in \Xc_t$, the operator $\sigma_t(x,\,\cdot\, , v_{t+1})$ is Lipschitz continuous
with respect to the metric $W_p(\cdot,\cdot)$ with the constant $L_t$:
\[
 \big| \sigma_{t}\big(x,\mu, {v}_{t+1}(\cdot)\big) - \sigma_{t}\big(x,\nu, {v}_{t+1}(\cdot)\big)\big|\\
\le L_{t}\, W_p(\mu,\nu), \quad \forall\, \mu,\nu \in \Pc_p(\Xc);
\]
\item[(A2)] For every $x\in \Xc_t$ and for every $t=0,1,\dots,T-1$, the operator $\sigma_t(x,\widetilde{Q}_t(x), \,\cdot\,)$ is
Lipschitz continuous
with respect to the norm in the space $\Lc_p(\Xc,\Bc(\Xc),\widetilde{Q}_t(x))$ with the constant $K_t$:
\begin{multline*}
 \big| \sigma_{t}\big(x,\widetilde{Q}_t(x), {v}(\cdot)\big) - \sigma_{t}\big(x,\widetilde{Q}_t(x), {w}(\cdot)\big)\big|
\le K_{t}\, \|v - w\|_p,\\  \forall\,v,w\in \Lc_p(\Xc,\Bc(\Xc),\widetilde{Q}_t(x)).
\end{multline*}
\end{description}
These are fairly schematic conditions, but they are exactly what we need for the analysis below. After the theorem, we discuss
several important cases, in which these conditions are satisfied.
\begin{theorem}
\label{t:error_estimate}
If assumptions (A1)--(A2) are satisfied, then for all  $t=0,\dots,T-1$ we have
\begin{equation}
\label{error-p}
\bigg(\int_\Xc \big| \widetilde{v}_{t}(x) - v_{t}(x) \big|^p\;\widetilde{\lambda}_t({\D x})\bigg)^{1/p} \le \sum_{\tau=t}^{T-1}L_\tau\bigg(\prod_{j=t}^{\tau-1}K_j\bigg)
\varDelta_\tau.
\end{equation}
In particular, for {  $t=0$},
\begin{equation}
\label{error_estimate-1}
\big| \widetilde{v}_{0}(x_0) - v_{0}(x_0) \big| \le
\sum_{\tau=0}^{T-1}L_\tau\bigg(\prod_{j=0}^{\tau-1} K_j\bigg)
\varDelta_\tau.
\end{equation}
\end{theorem}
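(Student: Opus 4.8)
The plan is a backward induction on $t$, tracking the quantity
\[
E_t \;:=\; \bigg(\int_\Xc \big| \widetilde{v}_{t}(x) - v_{t}(x) \big|^p\;\widetilde{\lambda}_t(\D x)\bigg)^{1/p},
\]
and establishing the one-step recursion $E_t \le K_t\, E_{t+1} + L_t \varDelta_t$ with terminal value $E_T=0$. The starting point is a pointwise decomposition. Fix $t\le T-1$ and $x\in\Xc_t=\supp(\widetilde\lambda_t)$. Subtracting \eqref{DP-risk-finite} from \eqref{DP-risk-approximate}, the terms $c_t(x)$ cancel, and inserting the hybrid term $\sigma_t\big(x,\widetilde Q_t(x),v_{t+1}(\cdot)\big)$ gives
\[
\widetilde{v}_t(x)-v_t(x)=\Big[\sigma_t\big(x,\widetilde Q_t(x),\widetilde v_{t+1}\big)-\sigma_t\big(x,\widetilde Q_t(x),v_{t+1}\big)\Big]+\Big[\sigma_t\big(x,\widetilde Q_t(x),v_{t+1}\big)-\sigma_t\big(x,Q_t(x),v_{t+1}\big)\Big].
\]
By assumption (A2) the first bracket is bounded in absolute value by $K_t\big(\int_\Xc |\widetilde v_{t+1}(y)-v_{t+1}(y)|^p\,\widetilde Q_t(\D y|x)\big)^{1/p}$, and by assumption (A1) the second bracket is bounded by $L_t\, W_p\big(Q_t(x),\widetilde Q_t(x)\big)$.

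Next I would take $\Lc_p(\widetilde\lambda_t)$-norms of both sides and apply Minkowski's inequality. For the $L_t$-contribution, raising to the $p$-th power and integrating against $\widetilde\lambda_t$ produces exactly $\big[\Wc_p^{\widetilde\lambda_t}(Q_t,\widetilde Q_t)\big]^p$, which is at most $\varDelta_t^p$ by \eqref{Delta}. The crucial step is the $K_t$-contribution: using Tonelli's theorem and the composition identity $\widetilde\lambda_{t+1}=\widetilde\lambda_t\circ\widetilde Q_t$ from \eqref{tilde-lambda},
\[
\int_\Xc \int_\Xc \big|\widetilde v_{t+1}(y)-v_{t+1}(y)\big|^p\,\widetilde Q_t(\D y|x)\,\widetilde\lambda_t(\D x)=\int_\Xc \big|\widetilde v_{t+1}(y)-v_{t+1}(y)\big|^p\,\widetilde\lambda_{t+1}(\D y)=E_{t+1}^p .
\]
Combining the two estimates yields $E_t \le K_t\, E_{t+1} + L_t\,\varDelta_t$. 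Since $\widetilde v_T\equiv v_T\equiv c_T$ we have $E_T=0$, so an elementary downward induction unrolls the recursion into $E_t \le \sum_{\tau=t}^{T-1}\big(\prod_{j=t}^{\tau-1}K_j\big)L_\tau\,\varDelta_\tau$ (empty products equal to $1$), which is \eqref{error-p}. Finally, since $\widetilde\lambda_0=\lambda_0=\delta_{x_0}$, the case $t=0$ reduces $E_0$ to $\big|\widetilde v_0(x_0)-v_0(x_0)\big|$, giving \eqref{error_estimate-1}.

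The only delicate points — more bookkeeping than genuine obstacle — are measurability and integrability. One must confirm that $x\mapsto W_p\big(Q_t(x),\widetilde Q_t(x)\big)$ and $x\mapsto \int_\Xc|\widetilde v_{t+1}-v_{t+1}|^p\,\widetilde Q_t(\D y|x)$ are Borel measurable so that the integrals above and Tonelli's theorem are legitimate, and that $\widetilde v_{t+1}-v_{t+1}$ lies in $\Lc_p(\Xc,\Bc(\Xc),\widetilde Q_t(x))$ for $\widetilde\lambda_t$-almost every $x$ and in $\Lc_p(\widetilde\lambda_{t+1})$; the latter follows from $v_{t+1},\widetilde v_{t+1}\in\Vc$ together with the polynomial growth condition built into the kernel class $\Qc_p(\Xc,\Xc)$ in Definition \ref{d:kernel-space} (which makes $\widetilde\lambda_{t+1}\in\Pc_p(\Xc)$), and finiteness of $E_{t+1}$ propagates down the induction from $E_T=0$. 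With these facts in hand, the argument is exactly the recursion sketched above.
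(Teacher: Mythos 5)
Your proof is correct and rests on the same two-term decomposition (inserting the hybrid term $\sigma_t(x,\widetilde Q_t(x),v_{t+1})$ and applying (A1) and (A2) to the two brackets), but you organize the induction differently, and the difference is worth noting. The paper first proves a \emph{pointwise} estimate, valid for every $x\in\Xc_t$, in which the error at $x$ is bounded by a sum of kernel distances $\Wc_p^{\delta_x\circ\widetilde Q_t\circ\cdots\circ\widetilde Q_{\tau-1}}(\widetilde Q_\tau,Q_\tau)$ weighted by the propagated marginals started at $\delta_x$; this forces it to carry the composition identities \eqref{key-identity-1} and \eqref{key-identity-2} through the induction and to integrate against $\widetilde\lambda_t$ only at the very end. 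You instead integrate at each step, which collapses everything into the scalar recursion $E_t\le K_t E_{t+1}+L_t\varDelta_t$; the only identity needed is $\widetilde\lambda_{t+1}=\widetilde\lambda_t\circ\widetilde Q_t$ applied once per stage via Tonelli, and the unrolling is immediate. Your version is the leaner route to \eqref{error-p} and \eqref{error_estimate-1}; what it gives up is the paper's intermediate pointwise bound \eqref{error_estimate}, which is stronger than the integrated statement but is not part of the theorem. Your closing remarks on measurability and on $\widetilde v_{t+1}-v_{t+1}$ lying in the relevant $\Lc_p$ spaces address points the paper itself leaves implicit, so nothing is missing.
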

\begin{proof}
First, we prove by induction backward in time that for all  $t=0,1,\dots,T-1$ and all $x\in \Xc_t$  we have
\begin{equation}
\label{error_estimate}
\big| \widetilde{v}_{t}(x) - v_{t}(x) \big| \le
\sum_{\tau=t}^{T-1}L_\tau\bigg(\prod_{j=t}^{\tau-1}K_j\bigg)
\Wc_p^{\delta_x \circ \widetilde{Q}_{t}\circ\dots \circ \widetilde{Q}_{\tau-1}}(\widetilde{Q}_{\tau},{Q}_{\tau}).
\end{equation}
At the time $t=T-1$, assumption (A1) yields the inequality
\begin{multline*}
\big| \widetilde{v}_{T-1}(x) - v_{T-1}(x) \big| \le
\Big| \sigma_{T-1}\big(x,\widetilde{Q}_{T-1}(x), {v}_{T}(\cdot)\big) - \sigma_{T-1}\big(x,{Q}_{T-1}(x), {v}_{T}(\cdot)\big)\Big|\\
\le L_{T-1}\, W_p(\widetilde{Q}_{T-1}(x),{Q}_{T-1}(x)) =  L_{T-1}\, \Wc_p^{\delta_x}(\widetilde{Q}_{T-1},{Q}_{T-1}),
\end{multline*}
which is the same as \eqref{error_estimate} for $T-1$.
Supposing \eqref{error_estimate} is true for $t$, we verify it for $t-1$. Using assumptions (A1) and (A2) we obtain:
\begin{align*}
\lefteqn{\big| \widetilde{v}_{t-1}(x) - v_{t-1}(x) \big|}\\
 &\le
\Big| \sigma_{t-1}\big(x,\widetilde{Q}_{t-1}(x), {v}_{t}(\cdot)\big) - \sigma_{t-1}\big(x,{Q}_{t-1}(x), {v}_{t}(\cdot)\big)\Big|\\
& {\quad}+ \Big| \sigma_{t-1}\big(x,\widetilde{Q}_{t-1}(x), \widetilde{v}_{t}(\cdot)\big) - \sigma_{t-1}\big(x,\widetilde{Q}_{t-1}(x), {v}_{t}(\cdot)\big)\Big|\\
&\le L_{t-1}\, W_p(\widetilde{Q}_{t-1}(x),{Q}_{t-1}(x))
 + K_{t-1} \bigg(\int_\Xc \big|\widetilde{v}_{t}(y)-{v}_{t}(y)\big|^p\; \widetilde{Q}_{t-1}(\D y| x)\bigg)^{1/p}.
\end{align*}
The substitution of \eqref{error_estimate} and the application of the Minkowski inequality yield
\begin{multline*}
\big| \widetilde{v}_{t-1}(x) - v_{t-1}(x) \big|
\le L_{t-1}\, \Wc_p^{\delta_x}(\widetilde{Q}_{t-1},{Q}_{t-1})\\
 + K_{t-1} \sum_{\tau=t}^{T-1}L_\tau\bigg(\prod_{j=t}^{\tau-1}K_j\bigg)
\bigg(\int_\Xc \big[\Wc_p^{\delta_y \circ \widetilde{Q}_{t}\circ\dots \circ \widetilde{Q}_{\tau-1}}(\widetilde{Q}_{\tau},{Q}_{\tau})\big]^p\;
\widetilde{Q}_{t-1}(\D y|x)\bigg)^{1/p}.
\end{multline*}
Observing that
\begin{equation}
    \label{key-identity-1}
\int_\Xc \big[\Wc_p^{\delta_y \circ \widetilde{Q}_{t}\circ\dots \circ \widetilde{Q}_{\tau-1}}(\widetilde{Q}_{\tau},{Q}_{\tau})\big]^p\;
\widetilde{Q}_{t-1}(\D y|x) = \big[\Wc_p^{\delta_x \circ \widetilde{Q}_{t-1}\circ\widetilde{Q}_t \circ\dots \circ \widetilde{Q}_{\tau-1}}(\widetilde{Q}_{\tau},{Q}_{\tau})\big]^p,
\end{equation}
we can write the preceding displayed inequality as
\begin{multline*}
\big| \widetilde{v}_{t-1}(x) - v_{t-1}(x) \big|
\le L_{t-1}\, \Wc_p^{\delta_x}(\widetilde{Q}_{t-1},{Q}_{t-1})\\
 + K_{t-1} \sum_{\tau=t}^{T-1}L_\tau\bigg(\prod_{j=t}^{\tau-1}K_j\bigg) \Wc_p^{\delta_x \circ \widetilde{Q}_{t-1}\circ\widetilde{Q}_t \circ\dots \circ \widetilde{Q}_{\tau-1}}(\widetilde{Q}_{\tau},{Q}_{\tau}),
\end{multline*}
which is the same as \eqref{error_estimate} for $t-1$. By induction, \eqref{error_estimate} is true for all $t$.

The formula \eqref{error-p} follows now by integrating the right-hand side of \eqref{error_estimate} and using the identity
\begin{equation}
    \label{key-identity-2}
\int_\Xc \big[\Wc_p^{\delta_x \circ \widetilde{Q}_{t}\circ\dots \circ \widetilde{Q}_{\tau-1}}(\widetilde{Q}_{\tau},{Q}_{\tau})\big]^p\;
\widetilde{\lambda}_t({\D x}) = \big[\Wc_p^{\widetilde{\lambda}_{\tau}}(\widetilde{Q}_{\tau},{Q}_{\tau})\big]^p, \quad \tau=t,\dots,T-1.
\end{equation}
The formula \eqref{error_estimate-1} is a special case of \eqref{error-p} {  resulting from
$\lambda_0=\widetilde{\lambda}_0=\delta_{x_0}$}.
\end{proof}
{
\begin{remark}
\label{r:interpretation}
{\rm At each time $t$, the ingredients of the formula \eqref{error-p}: $\widetilde{\lambda}_t$ and $\varDelta_t$,
are known. The identities  \eqref{key-identity-1} and \eqref{key-identity-2} explain  the use of the marginal $\tilde{\lambda}$ in \eqref{Delta},
and the mechanism of the error control. Compared to  \cite[Thm. 6.2]{zahle2022concept}, which deals with the expected value problem
in the backward system,  the error estimate \eqref{error_estimate-1} is linear in the $\varDelta_\tau$'s, $\tau= 1,\dots,T-1$.}
\end{remark}
}

Assumptions (A1) and (A2) can be verified in several relevant special cases.

\begin{example}
\label{e:nested}
{\rm
Consider the transition risk mappings of the following form:
\begin{multline}
\label{composite}
\sigma(x,\mu,v) = \\
\Eb_\mu\Big[f_1\Big(x,\Eb_\mu\big[f_2\big(x,\Eb_\mu[\;\cdots f_k(x,\Eb_\mu[f_{k+1}(x,v(\cdot))],v(\cdot))]\;\cdots,v(\cdot)\big)\big],v(\cdot)\Big)\Big],
\end{multline}
where $v:\Xc \to \Rb$, $\Eb_\mu\big[f(v(\cdot))\big] = \int_\Xc f(v(y))\;\mu({\D y})$, and $f_j:\Xc\times\Rb^{m_j} \times \Rb \to \Rb^{m_{j-1}}$, $j=1,\dots,k$, with $m_0=1$ and $f_{k+1}:\Xc\times\Rb\to\Rb^{m_k}$.  This is a fairly general class, considered in \cite{dentcheva2017statistical}, which covers several risk measures, such as the mean--semideviation measure \eqref{msd-form}.
Indeed, if $p=1$, we can write \eqref{msd-form} in the form \eqref{composite}, with $k=1$ and
$f_1(x,\eta,v(\cdot)) = \eta + \varkappa(x)[ v(\cdot) - \eta]_+$,
$f_2(x,v(\cdot)) = v(\cdot)$.

The model \eqref{composite} also covers the mapping \eqref{sigma-stopping} in the stopping problem.
In this case, $k=1$ again, and
$f_1(x,\eta,v(\cdot)) = \max(r(x); \eta)$,
$f_2(x,v(\cdot)) = v(\cdot)$.

Suppose the functions $f_j(x,\cdot,\cdot)$ , $j=1,\dots,k$, and  $f_{k+1}(x,\cdot)$  are Lipschitz continuous (it is true in both special cases mentioned above). Furthermore, let the function $v(\cdot)$ be Lipschitz continuous as well. Then,
by virtue of the Kantorovich--Rubinstein duality, the functional
$ g_{k+1}(\mu) =  \Eb_\mu\big[f_{k+1}(x,v(\cdot))\big]$
is Lipschitz continuous in the space $\Pc_1(\Xc)$. In a similar way, the mapping
$g_{k}(\mu)= \Eb_\mu\big[f_k(x,\Eb_\mu[f_{k+1}(x,v(\cdot))],v(\cdot))\big] = \Eb_\mu\big[f_k(x,g_{k+1}(\mu),v(\cdot))\big]$
is Lipschitz continuous as well. Proceeding in this way, we conclude that assumption (A1) is satisfied with $p=1$, as long as the
optimal value functions $v_t(\cdot)$, $t=1,\dots,T$, are Lipschitz continuous.

Consider assumption (A2). With a fixed measure ${\mu}$ (corresponding to $\widetilde{Q}_{t}(x)$ in (A2)), we observe that
the functional
$\varphi_{k+1}(v) = \Eb_\mu\big[f_{k+1}(x,v(\cdot))\big]$
is Lipschitz continuous in the space $\Lc_1(\Xc,\Bc(\Xc),\mu)$. This, in turn, implies that the functional
\[
\varphi_{k}(v)= \Eb_\mu\big[f_k(x,\Eb_\mu[f_{k+1}(x,v(\cdot))],v(\cdot))\big] = \Eb_\mu\big[f_k(x,\varphi_{k+1}(v),v(\cdot))\big]
\]
is Lipschitz continuous in $\Lc_1(\Xc,\Bc(\Xc),\mu)$. Proceeding in a similar way, we conclude that the assumption (A2) is satisfied as well.
\hfill$\Box$
}
\end{example}
\begin{example}
\label{e:avar}
{\rm
Consider the transition risk mapping \eqref{avar-def} derived from the Average Value at Risk.
If $v(\cdot)$ is Lipschitz continuous, then the mapping
$\mu \mapsto \avar_\alpha(x,\mu,v(\cdot))$
 is Lipschitz continuous on the space $\Pc_1(\Xc)$. Thus, assumption (A1) is satisfied with $p=1$. Furthermore,
for a fixed $\mu$, the mapping $v(\cdot) \mapsto \Eb_{\mu}\big[\max(0,v(y)-\eta)\big]$ is Lipschitz continuous (with the modulus 1)
in the space $\Lc_1(\Xc,\Bc(\Xc),\mu)$. Indeed, suppose $\eta_v$ achieves the infimum in \eqref{avar-def}. Then
\begin{multline*}
\avar_\alpha(x,\mu,w(\cdot)) - \avar_\alpha(x,\mu,v(\cdot)) \\
\le \frac{1}{\alpha} \Eb_{\mu}\big[\max(0,w(y)-\eta_v)\big] - \frac{1}{\alpha} \Eb_{\mu}\big[\max(0,v(y)-\eta_v)\big]
\le \frac{1}{\alpha} \|w -v \|_1.
\end{multline*}
Reversing the roles of $v$ and $w$ we obtain the Lipschitz continuity of \eqref{avar-def} on the space $\Lc_1(\Xc,\Bc(\Xc),\mu)$. If the infimum
is not achieved, which may happen for $\alpha=1$, then $\avar_1(x,\mu,v(\cdot))= \Eb_\mu[v(\cdot)]$ and the Lipschitz continuity is evident.
Therefore, assumption (A2) is satisfied. \hfill $\Box$
}
\end{example}
The last example allows for deriving the Lipschitz continuity in the space $\Pc_1(\Xc)$ of a broad class
of coherent risk mappings in the spectral form  \eqref{spectral}, or, more generally, enjoying the Kusuoka representation \cite{kus:01}.
We refer the reader to \cite[Thm. 6.5]{batch} for the details.
\begin{example}
\label{msd-p}
{\rm
Consider now the mean--semideviation mapping \eqref{msd-form} for $p>1$. By \cite[Lem. 6.6]{batch}, if $v(\cdot)$ is Lipschitz continuous, then the functional $\mu \mapsto \msd_p(x,\mu,v)$ is Lipschitz continuous on the space $\Pc_p(\Xc)$. Thus, assumption (A1) is satisfied.

Furthermore, for a fixed $\mu$, the continuity of the mapping $v \mapsto \msd_p(x,\mu,v)$ on the space $\Lc_p(\Xc,\Bc(\Xc),\mu)$
is evident, because it is a sum of a linear mapping and the norm. Thus, (A2) holds true as well.
\hfill $\Box$
}
\end{example}
It follows from the above examples that the assumptions (A1) and (A2) are indeed satisfied for a wide range of transition risk mappings.
The Lipschitz continuity of the value functions $v_t(\cdot)$, $t=2,\dots,T$, is crucial in this context.

This can be guaranteed by a simple induction argument. Suppose
each function $c_t(\cdot)$  and operator
$(x,\mu) \mapsto \sigma_t(x,\mu,v_{t+1}(\cdot))$
are Lipschitz continuous in $\Xc$ and $\Xc \times \Qc^{\lambda_t}(\Xc,\Xc)$, respectively, provided the function $v_{t+1}(\cdot)$
is Lipschitz continuous?  Moreover, let the kernels $Q_t:\Xc \to \Pc_p(\Xc)$, $t=1,\dots,T-1$, be Lipschitz continuous as well:
a constant $L_Q$ exists, such that
\begin{equation}
    \label{Dobrushin}
W_p(Q_t(x),Q_t(x')) \le L_{Q_t}\, d(x,x'),\quad \forall\;x,x'\in \Xc.
\end{equation}
 Then the function $v_t(\cdot)$ in \eqref{DP-risk-finite} is a composition
of Lipschitz continuous mappings, and it thus Lipschitz continuous. By induction, all value functions are Lipschitz continuous.
{  Their Lipschitz constants, though, may grow exponentially with the horizon $T-t$, if  $L_Q>1$. The constant
$L_Q$ is known as the \emph{ergodicity coefficient}; see \cite{rudolf2018perturbation} and the references therein.
}

We can also study the accuracy of the marginal distributions $\widetilde{\lambda}_t$, $t=1,\dots,T$. First, we
establish a useful continuity result.

\begin{lemma}
\label{f:muQ-cont}
If a kernel $Q:\Xc \to \Pc_p(\Xc)$ is Lipschitz continuous, then the mapping $\mu \mapsto \mu \circ Q$ is Lipschitz continuous on
$\Pc_p(\Xc)$ with the same modulus.
\end{lemma}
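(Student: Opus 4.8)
The plan is to prove the Lipschitz estimate
\[
W_p(\mu \circ Q, \nu \circ Q) \le L_Q\, W_p(\mu,\nu)
\]
for all $\mu,\nu\in \Pc_p(\Xc)$, where $L_Q$ is the Lipschitz constant of $Q$ from \eqref{Dobrushin}. The natural route is to build an explicit coupling of $\mu \circ Q$ and $\nu \circ Q$ out of an optimal coupling of $\mu$ and $\nu$ together with the optimal couplings of the kernel values $Q(x)$ and $Q(x')$. First I would take $\pi^*\in \Pi(\mu,\nu)$ to be an optimal transport plan realizing $W_p(\mu,\nu)$ (it exists by the remarks following Definition~\ref{Wass_d1}). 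For each pair $(x,x')\in \Xc\times\Xc$, let $\gamma_{x,x'}\in \Pi\big(Q(\cdot|x),Q(\cdot|x')\big)$ be an optimal coupling of the two kernel values, so that $\int_{\Xc\times\Xc} d(y,y')^p\,\gamma_{x,x'}(\D y,\D y') = \big[W_p(Q(x),Q(x'))\big]^p \le L_Q^p\, d(x,x')^p$.

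The next step is to assemble these into a measure on $\Xc\times\Xc$ (the target product space, tracking the two $\Yc=\Xc$ copies): define
\[
\beta(A\times B) = \int_{\Xc\times\Xc} \gamma_{x,x'}(A\times B)\;\pi^*(\D x,\D x'),\qquad A,B\in \Bc(\Xc).
\]
Here one must check that $(x,x')\mapsto \gamma_{x,x'}$ can be chosen measurably, so that the integral makes sense; this is the one genuine technical point, and it is handled by a measurable selection argument — the set-valued map sending $(x,x')$ to the (nonempty, compact) set of optimal couplings is measurable, and a measurable selector exists by the Kuratowski--Ryll-Nardzewski theorem. (Alternatively, one can invoke the gluing lemma / disintegration machinery as in \cite{villani2009optimal}.) Once $\beta$ is defined, I would verify that its first marginal is $\int_\Xc Q(\cdot|x)\,\mu(\D x) = \mu\circ Q$, using that the first marginal of $\gamma_{x,x'}$ is $Q(\cdot|x)$ and that the first marginal of $\pi^*$ is $\mu$; symmetrically the second marginal is $\nu\circ Q$. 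Hence $\beta\in \Pi(\mu\circ Q,\nu\circ Q)$.

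Finally, estimating the transport cost of $\beta$ gives, by Fubini and the optimality of the $\gamma_{x,x'}$,
\[
\big[W_p(\mu\circ Q,\nu\circ Q)\big]^p \le \int_{\Xc\times\Xc} d(y,y')^p\,\beta(\D y,\D y') = \int_{\Xc\times\Xc}\!\Big(\int_{\Xc\times\Xc} d(y,y')^p\,\gamma_{x,x'}(\D y,\D y')\Big)\pi^*(\D x,\D x')
\]
\[
\le \int_{\Xc\times\Xc} L_Q^p\, d(x,x')^p\,\pi^*(\D x,\D x') = L_Q^p\,\big[W_p(\mu,\nu)\big]^p,
\]
and taking $p$th roots yields the claim. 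I should also note at the outset that $\mu\circ Q,\nu\circ Q\in \Pc_p(\Xc)$, which follows from the moment bound computation already carried out in the excerpt (the display showing $\lambda\circ Q\in\Pc_p(\Yc)$), since $Q\in\Qc_p(\Xc,\Xc)$ — indeed Lipschitz continuity of $Q$ together with finiteness at one point forces the linear-growth bound in \eqref{kernel-class}. The main obstacle is purely the measurability of the selection $(x,x')\mapsto\gamma_{x,x'}$; everything else is the standard coupling-composition estimate.
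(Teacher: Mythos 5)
Your proposal is correct and follows essentially the same route as the paper: couple $\mu$ and $\nu$ optimally, couple $Q(x)$ and $Q(x')$ optimally for each pair, glue these into a transport plan between $\mu\circ Q$ and $\nu\circ Q$, and estimate the cost via Fubini and the Lipschitz bound on the kernel. The only difference is cosmetic — you explicitly flag the measurable-selection issue for $(x,x')\mapsto\gamma_{x,x'}$, which the paper's proof of this lemma passes over silently (though it invokes the same device elsewhere in the appendix) — so your write-up is, if anything, slightly more careful on that point.
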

\begin{proof}
If $\lambda({\D y}\,{\D y}'| x,x')$ is the optimal transport plan from $Q(x)$ to $Q(x')$, then
\[
W_p(Q(x), Q(x'))^p = \int_{\Xc\times\Xc} d(y,y')^p \;\lambda({\D y}\,{\D y}'| x,x') \le L_Q^p d(x,x')^p,
\]
where $L_Q$ is the Lipschitz constant of $Q$.
Suppose $\pi({\D x}\,{\D x}')$ is the optimal coupling of $\mu$ and $\nu$.
Consider the transport plan $\varPi = \pi \circ \lambda$, with $\pi$ considered as a marginal on $\Xc \times \Xc$, and
$\lambda$ as a kernel from $\Xc \times \Xc$ to $\Pc(\Xc \times \Xc)$. We have
\begin{align*}
\varPi(A \times \Xc) &= \int_{\Xc\times\Xc} \int_{\Xc}\lambda (A,{\D y}'|x,x')\;\pi({\D x}\,{\D x}')\\
&= \int_{\Xc\times\Xc} Q(A|x) \;\pi({\D x}\,{\D x}') = \int_{\Xc} Q(A|x) \;\mu({\D x}')= [\mu\circ Q](A).
\end{align*}
In a similar way,
$\varPi(\Xc \times B)  [\nu\circ Q](B)$, and thus $\varPi$ is a feasible transport plan from $\mu\circ Q$ to $\nu\circ Q$.
Therefore
\begin{align*}
W_p\big(\mu\circ Q,\nu\circ Q\big) &\le \int_{\Xc \times\Xc}d(y,y')^p\;\varPi({\D y}\,{\D y}')\\
&=  \int_{\Xc \times\Xc}\int_{\Xc \times\Xc} d(y,y')^p\;\lambda({\D y}\,{\D y}'| x,x')\;\pi({\D x}\,{\D x}')\\
&\le L_Q^p\int_{\Xc \times\Xc}d(x,x')^p\;\pi({\D x}\,{\D x}') = L_Q^p W_p(\mu,\nu)^p.
\end{align*}
It follows that $L_Q$ is the Lipschitz constant of the mapping $\mu \mapsto \mu \circ Q$.
\end{proof}
We can now easily estimate the errors of the marginal distributions.
\begin{theorem}
\label{t:lambda-error}
If the kernels $Q_t:\Xc\to \Pc_p(\Xc)$ are Lipschitz continuous with constants $L_{Q_t}$, then
\begin{equation}
\label{lambda-error}
W_p(\widetilde{\lambda}_t , \lambda_t) = \sum_{\tau=1}^{t-1}\varDelta_\tau \prod_{i=\tau+1}^{t-1}L_{Q_i}, \quad t=1,\dots,T.
\end{equation}
\end{theorem}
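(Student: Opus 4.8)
The plan is a short induction on $t$, built from a two-term decomposition of the one-period marginal error, with Lemma~\ref{f:muQ-cont} and Theorem~\ref{TS_T2} supplying the two estimates.

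First I would record the recursive structure of the marginals. From \eqref{tilde-lambda} we have $\widetilde{\lambda}_t = \widetilde{\lambda}_{t-1}\circ\widetilde{Q}_{t-1}$, and in exactly the same way $\lambda_t = \lambda_{t-1}\circ Q_{t-1}$. Inserting the intermediate measure $\widetilde{\lambda}_{t-1}\circ Q_{t-1}$ and using the triangle inequality for $W_p$ on $\Pc_p(\Xc)$,
\[
W_p(\widetilde{\lambda}_t,\lambda_t) \le W_p\big(\widetilde{\lambda}_{t-1}\circ\widetilde{Q}_{t-1},\,\widetilde{\lambda}_{t-1}\circ Q_{t-1}\big) + W_p\big(\widetilde{\lambda}_{t-1}\circ Q_{t-1},\,\lambda_{t-1}\circ Q_{t-1}\big).
\]
The first term is bounded using the right-hand inequality of \eqref{hierarchy} in Theorem~\ref{TS_T2} with the marginal $\widetilde{\lambda}_{t-1}$, namely $W_p(\widetilde{\lambda}_{t-1}\circ Q_{t-1},\widetilde{\lambda}_{t-1}\circ\widetilde{Q}_{t-1}) \le \Wc_p^{\widetilde{\lambda}_{t-1}}(Q_{t-1},\widetilde{Q}_{t-1})$, which is at most $\varDelta_{t-1}$ by the construction constraint \eqref{Delta}. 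The second term is handled by Lemma~\ref{f:muQ-cont}: since $Q_{t-1}$ is Lipschitz with constant $L_{Q_{t-1}}$, the map $\mu\mapsto\mu\circ Q_{t-1}$ has the same Lipschitz modulus, so that term is at most $L_{Q_{t-1}}\,W_p(\widetilde{\lambda}_{t-1},\lambda_{t-1})$. This produces the one-step recursion
\[
W_p(\widetilde{\lambda}_t,\lambda_t)\ \le\ \varDelta_{t-1} + L_{Q_{t-1}}\,W_p(\widetilde{\lambda}_{t-1},\lambda_{t-1}).
\]

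Next I would unroll this recursion from the base case $\lambda_0=\widetilde{\lambda}_0=\delta_{x_0}$, for which $W_p(\widetilde{\lambda}_0,\lambda_0)=0$. Iterating and collecting the accumulated products of the coefficients $L_{Q_i}$ gives the telescoping bound $W_p(\widetilde{\lambda}_t,\lambda_t) \le \sum_{\tau}\varDelta_\tau\prod_{i}L_{Q_i}$, which is \eqref{lambda-error} (read as an inequality); a one-line induction confirms the closed form, with the convention that an empty product equals $1$.

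There is no real obstacle here. The two points that need care are: (i) choosing the intermediate measure so that one term isolates the single-period kernel error measured against the \emph{already available} marginal $\widetilde{\lambda}_{t-1}$ — this is precisely why \eqref{Delta} is posed with $\widetilde{\lambda}_t$ rather than $\lambda_t$; and (ii) bookkeeping the index ranges in the product $\prod_{i=\tau+1}^{t-1}L_{Q_i}$. Both substantive ingredients, Lemma~\ref{f:muQ-cont} and Theorem~\ref{TS_T2}, are already proved, so the argument is entirely routine.
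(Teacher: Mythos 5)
Your proposal is correct and follows essentially the same route as the paper: the same triangle-inequality decomposition through the intermediate measure $\widetilde{\lambda}_{t-1}\circ Q_{t-1}$, the same use of Theorem~\ref{TS_T2} and Lemma~\ref{f:muQ-cont} for the two terms, and the same induction to unroll the recursion. Your reading of \eqref{lambda-error} as an inequality (and your care with the index range of the sum) is the right one, since that is all the recursion actually delivers.
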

\begin{proof}
The estimate \eqref{lambda-error} is true for $t=1$. Supposing it is valid for $t-1$, we verify it for~$t$:
\begin{align*}
W_p(\widetilde{\lambda}_t , \lambda_t) &=W_p(\widetilde{\lambda}_{t-1}\circ \widetilde{Q}_{t-1},  \lambda_{t-1}\circ Q_{t-1})\\
&\le W_p(\widetilde{\lambda}_{t-1}\circ \widetilde{Q}_{t-1},  \widetilde{\lambda}_{t-1}\circ {Q}_{t-1})
+ W_p( \widetilde{\lambda}_{t-1}\circ {Q}_{t-1},\lambda_{t-1}\circ Q_{t-1}) \\
& \le {\Wc}_p^{\widetilde{\lambda}_{t-1}}\big(\widetilde{Q}_{t-1},  {Q}_{t-1}) + W_p( \widetilde{\lambda}_{t-1}\circ {Q}_{t-1},\lambda_{t-1}\circ Q_{t-1})\\
&\le \varDelta_{t-1} + L_{Q_{t-1}} W_p( \widetilde{\lambda}_{t-1},\lambda_{t-1}).
\end{align*}
The substitution of \eqref{lambda-error} for $t-1$ yields the same estimate for $t$. By induction, it is true for all $t$.
\end{proof}

\section{Kernel Approximation by Particles}
\label{s:particles}

In the general method discussed in the previous section, we iteratively constructed approximate kernels $\widetilde{Q}_t$, proceeding from
$t=0$ to $t=T-1$, and we used their error estimates \eqref{Delta} to estimate the error of the risk evaluation.

Now we aim at an implementable method to realize this general scheme. The most important assumption is that the spaces $\Xc_t$, $t=0,1,\dots,T$,
 be \emph{finite}. We assume that we start from  $\Xc_0=\{x_0\}$.
At each stage $t$, we aim to construct a finite set $\Xc_{t+1} \subset \Xc$ of cardinality $M_{t+1}$ and a kernel $\widetilde{Q}_t:\Xc_t \to \Pc(\Xc_{t+1})$
by solving the following  problem:
\begin{equation}
\label{kernel-search}
\min_{\Xc_{t+1},\widetilde{Q}_t} \  \Wc_p^{\widetilde{\lambda}_t}(Q_t,\widetilde{Q}_t)\quad
\text{s.t.} \quad \supp(\widetilde{\lambda}_t \circ \widetilde{Q}_t) = \Xc_{t+1}\quad \text{and}\quad \big| \Xc_{t+1} \big| \le M_{t+1}.
\end{equation}
After (approximately) solving this problem, we increase $t$ by one and continue. Evidently, the objective function of this problem
is motivated by its direct effect on the error estimates in Theorems \ref{t:error_estimate} and \ref{t:lambda-error}.

Let us focus on effective and scalable ways for constructing an approximate solution to problem \eqref{kernel-search}.
We represent the (unknown) support of $\widetilde{\lambda}_t \circ \widetilde{Q}_t$ by $\Xc_{t+1} = \big\{ z_{t+1}^j\big\}_{j=1,\dots,M_{t+1}}$
and the (unknown) transition probabilities by $\widetilde{Q}(z_{t+1}^j|z_t^s)$, $s=1,\dots,M_n$, $j=1,\dots,M_{n+1}$.
With the use of Definition \ref{d:kernel-distance},   problem \eqref{kernel-search} can be equivalently rewritten as:
\begin{equation}
\label{kernel-search-2}
\begin{aligned}
\min_{\Xc_{t+1},\widetilde{Q}_t} & \; \sum_{s=1}^{M_n} \widetilde{\lambda}_t^s W_p\big(Q_t(\cdot|z_t^s),\widetilde{Q}_t(\cdot|z_t^s)\big)^p\\
\text{s.t.}&\; \supp \big(\widetilde{Q}_t(\cdot|z_t^s)\big) \subset \Xc_{t+1},\quad s=1,\dots,M_n,\\
&\ \big| \Xc_{t+1} \big| \le M_{t+1}.
\end{aligned}
\end{equation}
Let $\pi_t^s$ be a transportation plan from $Q_t(\cdot|z_t^s)$ to $\widetilde{Q}_t(\cdot|z_t^s)$. Then it follows from the definition of the
Wasserstein distance that $W_p\big(Q_t(\cdot|z_t^s),\widetilde{Q}_t(\cdot|z_t^s)\big)^p$ is the optimal value of the problem
\begin{equation}
\label{kernel-search-3}
\begin{aligned}
\min_{\pi_t^s \ge 0} &\; \sum_{j=1}^{M_{t+1}}\int_\Xc\| x - z_{t+1}^j\|^p\; \pi_t^{sj}({\D x})\\
\text{s.t.} &\; \int_{\Xc}\pi_t^{sj}({\D x}) = \widetilde{Q}_t(z_{t+1}^j|z_t^s),\quad j=1,\dots,M_{t+1},\\
&\; \sum_{j=1}^{M_{t+1}}\pi_t^{sj}(A) = Q_t(A|z_t^s),\quad \forall\,A\in \Bc(\Xc).
\end{aligned}
\end{equation}
The integration of problems \eqref{kernel-search-2}--\eqref{kernel-search-3} leads to a very difficult nonconvex infinite-dimension\-al problem
which can be only solved in very special cases. To develop a tractable approach in large-scale applications, we restrict the supports
of the kernels under consideration to   finite sets.
{  We may remark that the optimal quantization of probability distributions with the use of the Wasserstein metric was systematically studied in \cite{graf2007foundations}. Our problem is slightly different because we want to obtain a ``quantization'' of kernels.
}

In our particle approach, for $t=0,1,\dots,T-1$, each distribution $Q_t(\cdot|z_t^s)$ is represented by finitely many points (particles) $\big\{x_{t+1}^{s,i}\big\}_{i\in \Ic_{t+1}^s}$,  drawn independently from $Q_t(\cdot|z_t^s)$. If the state space $\Xc$ is finite-dimensional,
the expected error of this approximation is well-investi\-ga\-ted in \cite{dereich2013constructive,fournier2015rate}, as a function of the sample size $\big| \Ic_{t+1}^s\big|$, the dimension of the state space, and the moments of the distribution
{ (see formula \eqref{fournier} below)}. From this point, we consider the error of this large-size
discrete approximation as fixed, and we focus on constructing smaller support with as small an error to
the particle distributions as possible.
To this end, we introduce the sets
$\Zc_{t+1} =\big\{\zeta^k_{t+1}\big\}_{k=1,\dots,K_{t+1}}$,
which are pre-selected potential locations of the next-stage representative states
$z_{t+1}^j$, $j=1,\dots,M_{t+1}$. In the simplest case, we may
consider the union of the sets of particles, $\big\{ x_{t+1}^{s,i}, \ i\in \Ic_{t+1}^s,\ s=1,\dots,M_t\big\}$ as the potential locations,
but often computational efficiency requires that $K_{t+1} \ll \sum_{s=1}^{M_t} \big|\Ic_{t+1}^s\big|$.
There are several heuristic ways to choose the set $\Zc_{t+1}$ of potential points. For instance, they may be sampled independently along with successors at the particle generation step, or they may be sampled from a different distribution.
In any case, we still have $M_{t+1} \ll K_{t+1}$, which makes the problem of finding the best representative points nontrivial.

Suppose temporarily the next-stage representative points $\big\{ z_{t+1}^j\big\}_{j=1,\dots,M_{t+1}}$ have been found. Then the particle version of problem
\eqref{kernel-search-3} (for a fixed $s$) takes on the form:
\begin{equation}
\label{kernel-search-4}
\begin{aligned}
\min_{\pi_t^s \ge 0} &\; \sum_{j=1}^{M_{t+1}}\sum_{i\in \Ic_{t+1}^s}\| x_{t+1}^{s,i} - z_{t+1}^j\|^p\; \pi_t^{s,i,j}\\
\text{s.t.} 
&\; \sum_{j=1}^{M_{t+1}}\pi_t^{s,i,j} = \frac{1}{|\Ic_{t+1}^s|},\quad i\in \Ic_{t+1}^s.
\end{aligned}
\end{equation}
It has a straightforward solution: find for each particle $i$ the closest representative point,
$
j^*(i)= \argmin_{j=1,\dots,M_{t+1}} \| x_{t+1}^{s,i} - z_{t+1}^j\|,
$
and set
$\pi_t^{s,i,j^*(k)}  = \frac{1}{|\Ic_{t+1}^s|}$;
for other $j$, we set it to 0. The implied approximate kernel is
\begin{equation}
\label{implied-kernel}
\widetilde{Q}_t(z_{t+1}^j|z_t^s) = \sum_{i\in \Ic_{t+1}^s} \pi_t^{s,i,j},\quad s=1,\dots,M_t,\quad j=1,\dots,M_{t+1},
\end{equation}
which simply counts the particles from $\Ic_{t+1}^s$ which were assigned to $z_{t+1}^j$.

These considerations allow us to integrate problems \eqref{kernel-search-4} into \eqref{kernel-search-2}. We introduce the binary variables
\[
\gamma_k = \begin{cases} 1 & \text{ if the point $\zeta_{t+1}^k$ has been selected to $\Xc_{t+1}$},\\
0 & \text{ otherwise},
\end{cases}
\quad k=1,\dots,K_{t+1},
\]
and we re-scale the transportation plans:
\[
\beta^{s,i,k} = |\Ic_{t+1}^s|\pi_t^{s,i,k}, \quad s=1,\dots,M_t,\  i \in \Ic_{t+1}^s,\  k=1,\dots,K_{t+1}.
\]
We obtain from \eqref{kernel-search-2} the following linear mixed-integer optimization problem:
\begin{equation}
\label{mixed-bin}
\begin{aligned}
\min_{\gamma,\beta}
&\; \sum_{s=1}^{M_n}  \frac{\widetilde{\lambda}_t^s}{|\Ic_{t+1}^s|}\sum_{k=1}^{K_{t+1}}\sum_{i\in \Ic_{t+1}^s}\| x_{t+1}^{s,i} - \zeta_{t+1}^k\|^p\; \beta^{s,i,k}\\
\text{s.t.} &\;\beta^{s,i,k} \le \gamma_k,\quad s=1,\dots,M_t,\quad i \in \Ic_{t+1}^s,\quad k=1,\dots,K_{t+1}, \\
&\; \sum_{k=1}^{K_{t+1}} \beta^{s,i,k} = 1, \quad s=1,\dots,M_t,\quad i \in \Ic_{t+1}^s,\\
&\; \sum_{k=1}^{K_{t+1}} \gamma_k \le M_{t+1},\\
&\; \beta^{s,i,k} \in[0,1],\  \gamma_k\in \{0,1\}, \quad s=1,\dots,M_t,\  i \in \Ic_{t+1}^s,\  k=1,\dots,K_{t+1}.
\end{aligned}
\end{equation}
The complicating element is that the $\gamma_k$'s are binary variables. However, we may solve the relaxation of \eqref{mixed-bin}
in which we require only that $\gamma_k \in [0,1]$, $k=1,\dots,K_{t+1}$, { while still bounding their sum by $M_{t+1}$}. After that, we may randomly assign to fractional
$\gamma_k$'s values 0 or 1, by using independent Bernoulli random variables with parameters $\gamma_k$, and then resolve
\eqref{mixed-bin} with respect to the $\beta$ variables only. This can be accomplished by assigning each point $x_{t+1}^{s,i}$ to the
closest $\zeta_{t+1}^k$ having $\gamma_k=1$. The implied approximate kernel is given by \eqref{implied-kernel}:
\begin{equation}
\label{implied-kernel-2}
\widetilde{Q}_t(\zeta_{t+1}^k|z_t^s) = \frac{1}{|\Ic_{t+1}^s|}\sum_{i\in \Ic_{t+1}^s} \beta^{s,i,k},\quad s=1,\dots,M_t,\quad k=1,\dots,K_{t+1}.
\end{equation}
By construction, these probabilities can be positive only when $\gamma_k=1$.

Finally, $\widetilde{\lambda}_{t+1} = \widetilde{\lambda}_t \circ \widetilde{Q}_t$, and the iteration continues until $t=T$.

At each stage $t$, the estimate of the error $\varDelta_t$ in \eqref{Delta} can be computed: it is the sum of the $p$-th root
of the objective value of \eqref{mixed-bin}
 and the particle distribution error.
 {
 Denoting by $\widehat{Q}_t$ the approximate kernel defined by all the particles sampled, due to Theorem~\ref{TS_T}, we have
 \[
\Wc_p^{\tilde{\lambda}_t}(\widetilde{Q}_t,Q_t) \le
\Wc_p^{\tilde{\lambda}_t}(\widetilde{Q}_t,\widehat{Q}_t)  + \Wc_p^{\tilde{\lambda}_t}(\widehat{Q}_t,Q_t).
 \]
 To recall a bound on the expected value of the second term, we assume that the state space is
 finite-dimensional, and that for each point $z_t^s$ the measure $Q_t(\cdot|z_t^s)$ has
 a finite moment $m_u$ for some $u>p$.
 The following
inequality due to \cite{dereich2013constructive,fournier2015rate} is true for all $N=|\Ic_{t+1}^s|$:
\begin{multline}
\label{fournier}
\Eb \big[ W_p\big(\widehat{Q}_t(\cdot|z_t^s),Q_t(\cdot|z_t^s\big)\big] \le C m_{u}^{p/{u}} \times \\
\times \begin{cases}
N^{-1/2} + N^{-({u}-p)/{u}} & \text{ if } p>n/2 \text{ and } {u}\ne 2p,\\
N^{-1/2}\ln(1+N) + N^{-({u}-p)/{u}} & \text{ if } p=n/2 \text{ and } {u}\ne 2p,\\
N^{-p/n} + N^{-({u}-p)/{u}} & \text{ if } p<n/2 \text{ and } {u}\ne \frac{n}{n-p},
\end{cases}
\end{multline}
where $n=\text{dim}(\Xc)$, and $C$ is a constant depending only on $p$, ${u}$, and $n$. If the number $N$ of particles
sampled from each ${Q}_t(\cdot|z_t^s)$ is the same for all  $s=1,\dots,M_t$, the expected distance
$\Eb \big[ \Wc_p^{\tilde{\lambda}_t}(\widehat{Q}_t,Q_t) \big]$ is bounded by the expression \eqref{fournier} as well.

Our procedure adds to this error a fully controllable part $\Wc_p^{\tilde{\lambda}_t}(\widetilde{Q}_t,\widehat{Q}_t)$
by constructing a set of representative points $z_{t+1}^j$, $j=1,\dots,M_{t+1}$, each of which may serve as a ``descendant''
of multiple points $z_t^s$. Our experience shows that for large $M_t$ the total number of these points, $M_{t+1}$, is comparable to
$M_t$, and thus much smaller than the number of particles $NM_t$. As a result, the total number of representative points, while still exponential in the dimension of the state space, grows only linearly with the number of time steps.
We elaborate on it in the next section.

 }

\section{Numerical Illustration}
\label{s:illustration}

Consider $n$ stocks $\big\{S_t^{(i)}\big\}$, $i=1, \ldots, n$, in an arbitrage-free and complete market, following (under the risk-neutral probability measure ${\Qb}$) the equations:
\begin{equation}
\D S_t^{(i)}=r S_t^{(i)} \D t+\sigma^{(i)} S_t^{(i)} \D W_t^{{\Qb}}, \quad i=1,\dots,n,\quad t \in[0, T].
\end{equation}
Here, $\{W_t^{{\Qb}}\}$ is an $n$-dimensional Brownian motion under $\Qb$, $r$ is the risk-free interest rate, and $\sigma^{(i)}$ is the $n$ dimensional (row) vector of volatility coefficients of stock $i$. We assume that the coefficients $r$ and $\sigma$ are constant, but our methodology is applicable to problems with varying coefficients as well.

An option is one of the most common financial derivatives that give buyers the right, but not the obligation, to buy or sell an underlying asset at an agreed-upon price during a certain period of time. The American option is the type of option that can be exercised anytime, prior to the maturity time $T$.
If exercised at time $t$, the option pays $\varPhi(S_{t})$ for some known function $\varPhi:{\Rb}^n\to [0,+\infty)$. The price of the American option is given by the optimal value of the stopping problem:
\begin{equation}
V_t(x)=\sup_{{\tau - \text{stopping time}}\atop {t \le \tau \le T}} {E}^{\Qb}\big[e^{-r (\tau-t)} \varPhi\left(S_\tau\right) \big| S_{t}=x \big], \quad x \in {R}^n,
\end{equation}
In our example, $\varPhi(S_{t}) = \max\Big(0,K - \sum_{i=1}^n w_iS_t^{(i)}\Big)$ is the value of the basket put option, with the basket weights $w_i$, $i=1,\dots,n$.

To develop a numerical scheme for approximating the option value, we first partition the time interval $[0, T]$ into short intervals of length $\varDelta t= T/N$:
$\varGamma_N=\big\{t_{i}=i\varDelta t:$ $i=0,1,\dots,N\big\}$.

With the exercise times restricted to $\varGamma_N$, we approximate the option value by
\begin{equation}
V_t^{(N)}(x)=\sup_{{\tau - \text{stopping time}}\atop{\tau \in \Gamma_N}} {E}^{\Qb}\big[e^{-r (\tau-t)} \varPhi\left(S_\tau\right) \big|  S_{t}=x \big], \quad t\in \varGamma_N,\quad x \in {R}^n.
\end{equation}
We view  $V_t^{(N)}(x)$ as an approximation to the actual American option price when $N$ increases to infinity.
It satisfies the following dynamic programming equations:
\begin{gather*}
V_{t_N}^{(N)}(x) =  \varPhi(x), \quad x\in \Rb^n,\\
V_{t_i}^{(N)}(x) =\max \left\{\varPhi(x), {E}^{{\Qb}}\big[e^{-r\varDelta t} V_{t_{i+1}}^{(N)}\left(S_{t_{i+1}}\right) \big| S_{t_i}=x\big]\right\},\quad i=0,1,\dots,N-1,
\end{gather*}
which is a special case of \eqref{sigma-stopping}.
We apply two methods to simulate the movements of stocks and compare the values of the approximation of the American basket option. The first method is the grid point selection method based on the integrated transportation distance. For every time step $t_i$, we select the representative point(s) $z_i^j, j = 1, \ldots, M_i$ to represent the state space at time ${t_i}$, as outlined in \S \ref{s:particles}. We compare the above method with the binomial tree method, a lattice method based on the random walk approximation to the Brownian motion. Between the start and expiration dates, each grid point in a lattice represents the state of the system at a given time step. Starting from the grid points at the final time step, the prices at the preceding grid points are computed in a backward direction. Since every node of the lattice has $2^n$ descendants, the number of lattice points in the binomial tree method grows exponentially,
as the number of the time steps increases. In the grid point selection method, the total number of representative points
grows approximately at a linear rate with respect to the total number of time steps $N$.

\begin{table}[h]
\caption{Convergence of the American basket put option prices with respect to the number of time discretization steps.}
\begin{center}
\begin{tabular}{ ccccc}
\toprule
 $N$ &  grid  &binomial  \\
 \midrule
 1 & 0.832  & 0.824 \\
 2 & 0.869 & 1.009  \\
 5 & 0.880  & 0.896 \\
 10 & 0.880 & 0.873  \\
 25 & 0.884  & 0.887 \\
 50 & 0.887  &  0.889 \\
    \bottomrule
\end{tabular}
\end{center}
 \label{2d-table}
\end{table}

In the initial experiment, both methods are applied to evaluate the American basket put option with $n = 2$ and the payoff function for the American basket put is $\varPhi_p(S_{t}) = \max (K-\sum_{i=1}^n w_i S_t^{(i)}, 0)$, where $w_i$ is the percentage of stock $i$ held in the portfolio and $K$ is the strike price. The values of the parameters are
$S_0=[10,10]$, $r=0.03$, $K=10$, $w=(0.5,0.5)$, and $T=1$. The volatility coefficients were:
$\sigma=\left[\begin{array}{cccc}
0.5 & -0.2; &
-0.2 & 0.5
\end{array}\right]$.

Table \ref{2d-table} compares the approximated option prices using the grid point selection method and the binomial tree method. Figure \ref{fig:abp} summarizes the convergence of the American basket put option as the number of time steps increases.
Moreover, the upper bound of the error in estimating value function is determined by the integrated transportation distance at every time stage. For the grid point selection method, we have computed the the integrated transportation distances for the first few time stages. $\varDelta_0 = 0.239$, $\varDelta_1 = 0.211$, $\varDelta_2 = 0.192$, $\varDelta_3 = 0.190$, and $\varDelta_4 = 0.181$.


    \begin{figure}
    \centering

    \begin{minipage}[b]{0.45\textwidth}
        \centering
        \includegraphics[width=\textwidth]{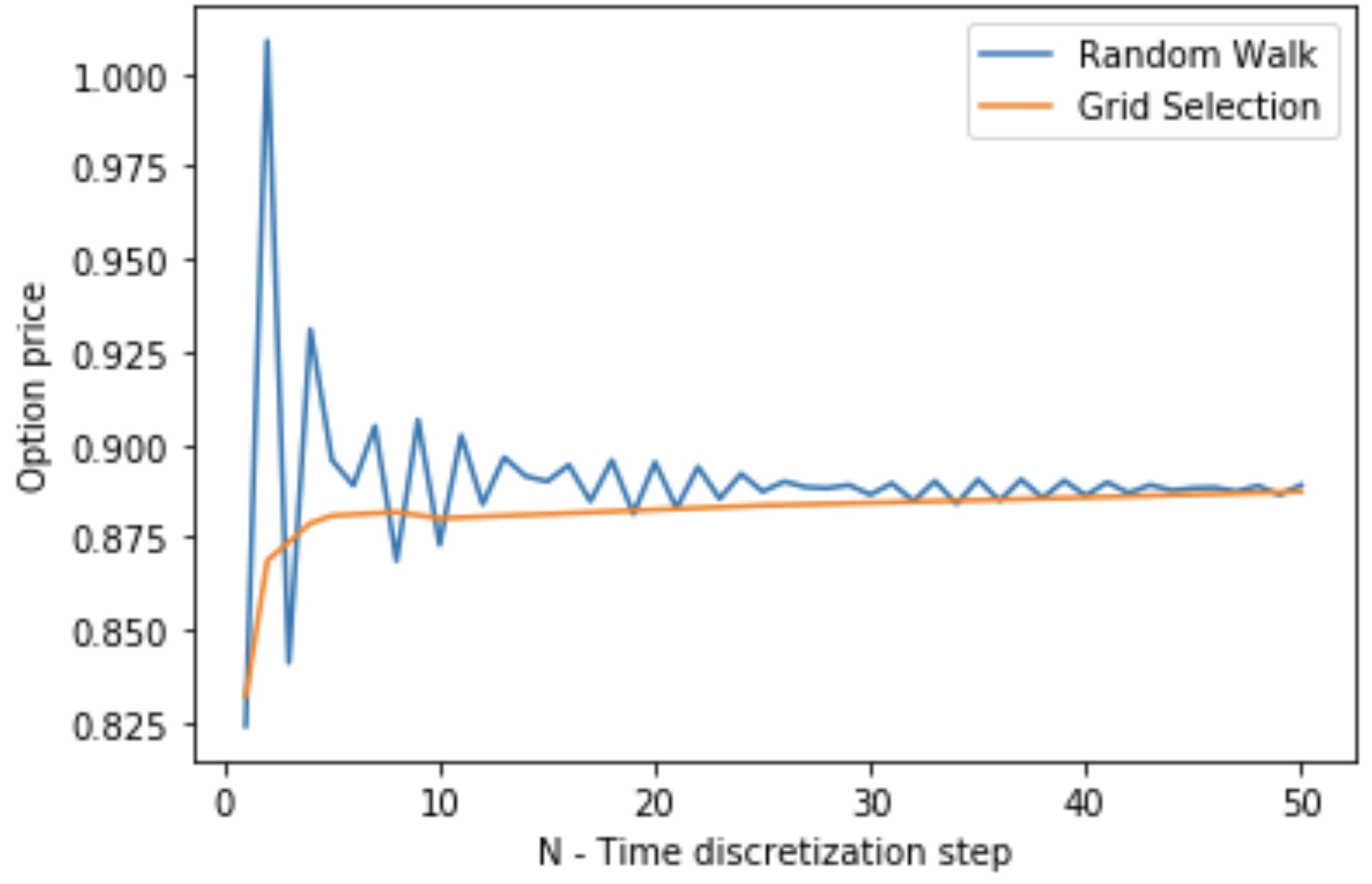}
 \caption{The approximate value of the American basket put option as a function of the number of time steps}
 \label{fig:abp}
    \end{minipage}
    \hfill
    \begin{minipage}[b]{0.45\textwidth}
        \centering
        \includegraphics[width=\textwidth]{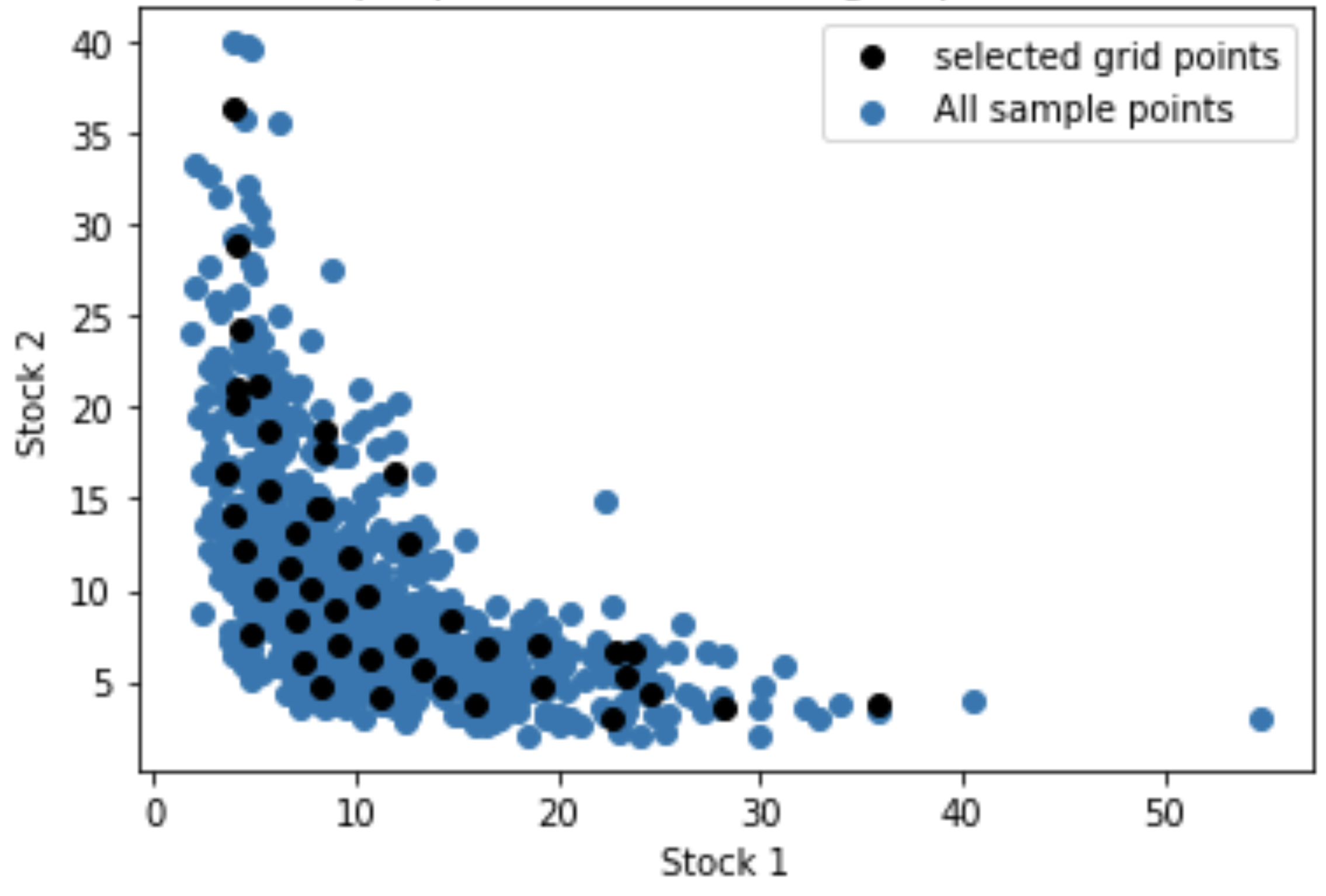}
 \caption{All sample points (blue, N = 1000) vs. selected grid points (black, M= 46).}
 \label{fig:S1}
    \end{minipage}

\end{figure}


In order to demonstrate the stability of the approximate prices using our grid point selection method, we will also apply this method on risk measures at $T = 1.$ A practically relevant law-invariant coherent measure of risk is the mean--semideviation of order $p \geq 1$, defined in \eqref{msd-form}.
Figure \ref{fig:S1} illustrates an example of selecting grid points from the simulated stock prices at $T=1$.
In the grid selection method, we set the number of grid points to be around 400 selected out of 1000 randomly sampled points. We repeated the experiment over 900 times and recorded the mean and semideviation estimates. In the Monte Carlo experiment, we sampled 1000 points and evaluated plug-in estimates of the mean and the semideviation; this experiment was repeated 5000 times.
In Figure \ref{fig:esp}, we plot the histograms of the estimated expected values, and in Figure \ref{fig:ems}, the histograms of the estimated semideviations. It is obvious that the approximated values from the grid selection method are more stable than those from the Monte Carlo simulation.

\begin{figure}
\centering

\begin{subfigure}{0.45\textwidth}
    \includegraphics[width=\textwidth]{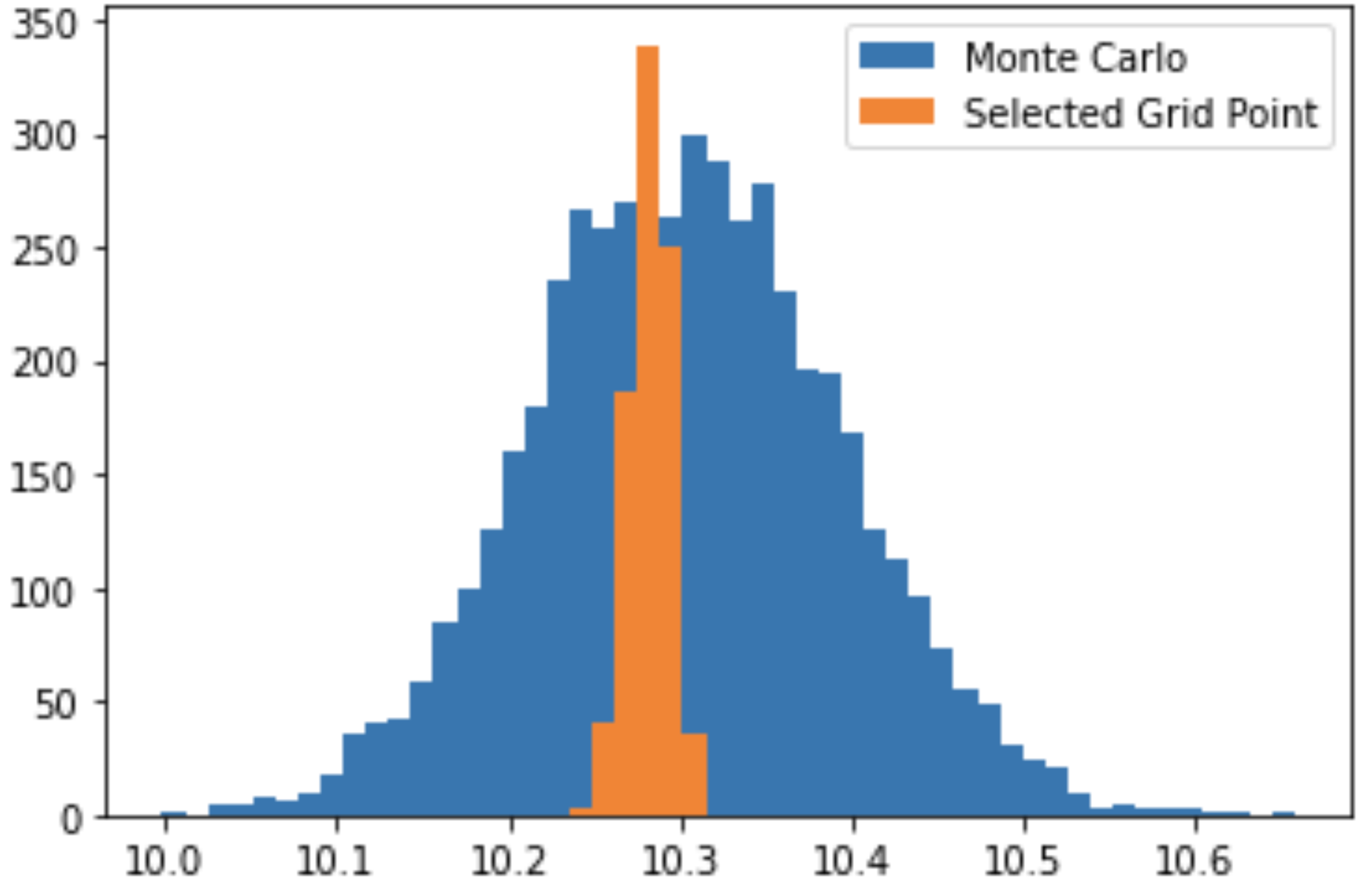}
 \caption{Histogram of the estimated expected value of the stock price.}
 \label{fig:esp}
\end{subfigure}
\hfill
\begin{subfigure}{0.45\textwidth}
    \includegraphics[width=\textwidth]{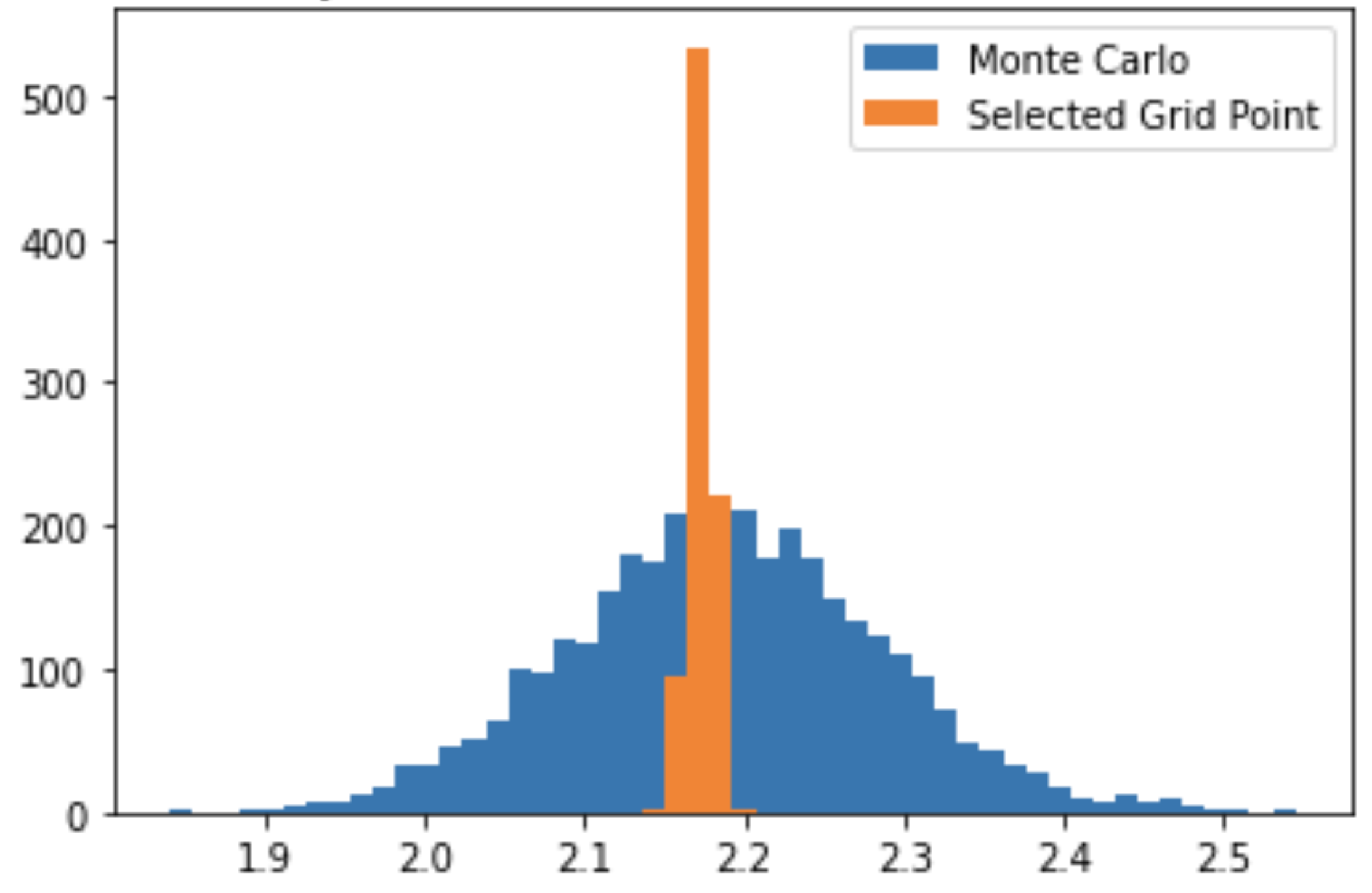}
 \caption{Histogram of the estimated semideviation of the stock price.}
 \label{fig:ems}

\end{subfigure}
\hfill

\caption{Monte Carlo simulation vs. the grid selection method. }

\end{figure}

In our more challenging experiment, we estimated the American put option value for a five-dimensional stock basket.
The values of the parameters are
$S_0=[10,10,10,10,10]$, $r=0.03$, $K=10$, $w=(0.2,0.2,0.2,0.2,0,2)$, $T=1$, and
\[
\sigma=\left[\begin{array}{ccccc}
0.5 & 0.2 & 0.3 & -0.2 & 0.15 \\
0.2 & 0.5 & -0.15 & 0.3 & 0.12 \\
0.3 & -0.15 & 0.75 & -0.1 & 0.1 \\
-0.2 & 0.03 & -0.1 & 0.3 & 0.05 \\
0.15 & 0.12 & 0.1 & 0.05 & 0.4
\end{array}\right].
\]

Table \ref{GP} displays the convergence of the American put option prices as we increase the number $N$ of time discretization points, using the grid selection method and the binomial tree method. $M$ refers to the total number of grid points used. As shown in the table, the binomial tree method cannot go beyond $N = 12$ because the total number of grid points, $M$, increases exponentially with $N$. The grid point selection method achieves similar results to that of the binomial tree method while requiring {   only {linear} growth of the total number of representative points
with the number of stages.} \vspace{-1ex}

\begin{table}[H]
 \caption{ Convergence of the estimates of the American put price with respect to the number of time discretization steps $N$.}
\begin{center}
\begin{tabular}{ ccccc}
 \toprule
 $N$ &  put - grid & put - binomial & $M$ - grid & $M$ - binomial \\
 \midrule
 1 & 1.168 & 1.179 & 30343 & 33\\
 2 & 1.188 & 1.223 & 38740 & 276\\
 3 & 1.207 & 1.239 & 50891 & 1300\\
 4 & 1.213 & 1.240 & 56970 & 4425\\
 5 & 1.231 & 1.241 & 74044 & 12201\\
 6 & 1.231 & 1.242 & 81022 & 29008\\
 7 & 1.240 & 1.242 & 94592 & 61776\\
 8 & 1.239 & 1.244 & 97639 & 120825\\
 9 & 1.250 & 1.244 & 127981 & 220825\\
 10 & 1.254 & 1.244 & 136378 & 381876\\
 11 & 1.258 & 1.245 & 148528 & 630708\\
 12 & 1.259 & 1.246 & 154607 & 1002001\\
  \bottomrule
\end{tabular}
\end{center}
\label{GP}
\end{table}


\bibliographystyle{abbrv}
\newcommand{\noop}[1]{}

\appendix

\section{Proofs of the statements in \S \ref{s:kernel-distance} }
\begin{proof}[Proof of Theorem \ref{TS_T}]

It is obvious that $\Wc_p^\lambda(Q, \widetilde{Q}) \geq 0$ for any $Q, \widetilde{Q} \in \Qc_{p}^\lambda(\Xc,\Yc)$ and $\Wc_p^\lambda(Q, \widetilde{Q})=0$ if and only if $Q= \widetilde{Q}$ ${\lambda}$-a.s.. We next verify the triangle inequality.
For all $Q, {Q}^{\prime},  \widetilde{Q} \in \Qc_p^\lambda(\Xc,\Yc)$, by the triangle inequality for $W_p(\cdot,\cdot)$ and then by the Minkowski inequality,
we obtain
\begin{equation*}
\begin{split}
\lefteqn{\Wc_p^\lambda(Q, \widetilde{Q})
\leq \left(\int_{\Xc}\big[{W}_{p}(Q(\cdot|x), Q^{\prime}(\cdot|x))+{W}_{p}(Q^{\prime}(\cdot|x), \widetilde{Q}(\cdot|x))\big]^p\; {\lambda}(\D x) \right)^{1/p}} \\
&\leq \left(\int_{\Xc}\big[{W}_{p}(Q(\cdot|x), Q^{\prime}(\cdot|x))\big]^p\; {\lambda}(\D x) \right)^{1/p} +
\left(\int_{\Xc}\big[{W}_{p}(Q^{\prime}(\cdot|x), \widetilde{Q}(\cdot|x))\big]^p\; {\lambda}(\D x) \right)^{1/p}\\
&=\Wc_p^\lambda(Q, {Q}^{\prime}) + \Wc_p^\lambda({Q}^{\prime},\widetilde{Q}).
\end{split}
\end{equation*}
Furthermore, setting $Q'(\cdot|x)=\delta_{\{y_0\}}(\cdot)$ and using \eqref{kernel-class}, we get
\begin{equation}
\label{dist-to-delta}
\begin{aligned}
\lefteqn{\big[\Wc_p^\lambda(Q,\delta_{\{y_0\}})\big]^p
= \int_{\Xc} \big[{W}_{p}(Q(\cdot | x),\delta_{\{y_0\}} )\big]^p \;{\lambda}(\D x)}\quad \\
&=\int_{\Xc} \int_{\Yc} d(y, y_0)^{p} \;  Q({\D y}|x) \;{\lambda}(\D x)
\le C(Q)\int_{\Xc} \big(1 + d(x,x_0)^p\big) \;{\lambda}(\D x) < \infty,
\end{aligned}
\end{equation}
which proves
the finiteness of $\Wc_p^\lambda(Q, \widetilde{Q})$, if $\lambda \in \Pc_p(\Xc)$.
\end{proof}

\begin{proof}[Proof of Theorem \ref{TS_T2}]
From Eq \eqref{TS} we obtain,
\begin{equation*}
\begin{split}
    \big[\Wc_{p}^\lambda(Q, \widetilde{Q}) \big]^p
    &= \int_{\Xc}  \int_{\Yc \times \Yc } d(y, y')^{p} \; \pi^*({\D y}, {\D y}'|x)\; \lambda(\D x),
\end{split}
\end{equation*}
where $\pi^*(\cdot,\cdot|x)$ is the optimal transportation plan between $Q(\cdot|x)$ and $\widetilde{Q}(\cdot|x)$. By the measurable selection theorem,
the mapping $x \mapsto \pi^*(\cdot,\cdot|x)$ may be viewed as a kernel from $\Xc$ to $\Pc(\Yc\times\Yc)$.

Now, we construct from $\pi^*$ a transportation plan $\varPi^*\in \Pc\big((\Xc\times \Yc) \times (\Xc\times \Yc)\big)$: for all $A_X,B_X\in \Bc(\Xc)$ and
all $A_Y,B_Y\in \Bc(\Yc)$ we set
\begin{equation}
\label{pi-star}
\varPi^*\big( (A_X\times A_Y)\times (B_X\times B_Y)\big) = \int_{A_X\cap B_X} \pi^*(A_Y \times B_Y|x)\;\lambda({\D x}).
\end{equation}
Setting $B_X = \Xc$ and $B_Y=\Yc$ we obtain the marginal of $\varPi^*$:
\begin{multline*}
\varPi^*\big( (A_X\times A_Y)\times (\Xc\times \Yc)\big) = \int_{A_X} \pi^*(A_Y \times \Yc|x)\;\lambda({\D x}) \\
= \int_{A_X} Q(A_Y|x)\;\lambda({\D x}) = (\lambda \circledast Q) (A_X \times A_Y).
\end{multline*}
The second marginal is verified in an analogous way and thus $\varPi^*$ moves $\lambda \circledast Q$ to $\lambda \circledast \widetilde{Q}$.
Then, by virtue of \eqref{pi-star},
\begin{align*}
{W}_p({\lambda} \circledast Q,{\lambda} \circledast \widetilde{Q})^p
&\le \int_{(\Xc \times \Yc) \times (\Xc \times \Yc)} d\big( (x,y), (x',y')\big)^p\; \varPi^*({\D x} \;{\D y},{\D x}'\,{\D y}')\\
& = \int_{\Yc \times \Yc} d(y,y')^p \int_{\Xc} \pi^*({\D y},{\D y}'|x)\;\lambda({\D x}) = \big[\Wc_{p}^\lambda(Q, \widetilde{Q}) \big]^p,
\end{align*}
which verifies the  left inequality in \eqref{hierarchy}.

Next, for the optimal transportation plan $\widehat{\varPi}\in \Pc\big((\Xc\times \Yc) \times (\Xc\times \Yc)\big)$, with marginals
$\lambda \circledast Q$ and $\lambda \circledast \widetilde{Q}$,
we construct a transportation
plan $\hat{\pi}\in \Pc(\Yc\times\Yc)$ as
\[
\hat{\pi}(A_Y \times B_Y) = \widehat{\varPi}\big( (\Xc\times A_Y)\times (\Xc \times B_Y)\big), \quad \forall\, A_Y,B_Y\in \Bc(\Yc).
\]
Then
\[
\hat{\pi}(A_Y \times \Yc) = \widehat{\varPi}\big( (\Xc\times A_Y)\times (\Xc \times \Yc)\big) = [\lambda \circledast Q ](\Xc\times A_Y) = [\lambda \circ Q ](A_Y).
\]
The second marginal is verified analogously and thus $\hat{\pi}$ moves $\lambda \circ Q$ to $\lambda \circ \widetilde{Q}$. Therefore,
\begin{align*}
\lefteqn{{W}_p({\lambda} \circ Q,{\lambda} \circ \widetilde{Q})^p
\le \int_{ \Yc \times \Yc} d(y, y')^p\; \hat{\pi}({\D y},{\D y}')}\quad \\
&= \int_{(\Xc \times \Yc) \times (\Xc \times \Yc)} d(y,,y')^p\; \widehat{\varPi}({\D x} \,{\D y},{\D x}'\,{\D y}')\\
&\le \int_{(\Xc \times \Yc) \times (\Xc \times \Yc)} d\big( (x,y), (x',y')\big)^p\; \widehat{\varPi}({\D x} \,{\D y},{\D x}'\,{\D y}')
= {W}_p({\lambda} \circledast Q,{\lambda} \circledast \widetilde{Q})^p.
\end{align*}
which is the  right inequality in \eqref{hierarchy}.
\end{proof}
\begin{proof}[Proof of Theorem \ref{t:kernel-distance-metric}]
The implication (ii)$\Rightarrow$(i) follows from Theorem \ref{TS_T2}, because the first inequality in \eqref{hierarchy} yields ${W}_p({\lambda} \circledast Q_k,{\lambda} \circledast {Q})\to 0$,
and thus $\lambda \circledast Q_k \overset{p}\to {\lambda} \circledast {Q}$, by virtue of \cite[Thm. 6.9]{villani2009optimal}.
The latter convergence implies that Definition \ref{d:kernel-convergence} is satisfied.

 To prove the implication (i)$\Rightarrow$(ii), we adopt some ideas of the proof of \cite[Thm 6.9]{villani2009optimal}.
From Eq. \eqref{TS} we obtain,
\begin{equation*}
\begin{split}
    \big[\Wc_{p}^\lambda(Q_k,{Q}) \big]^p
    &= \int_{\Xc}  \int_{\Yc \times \Yc } d(y, y')^{p} \; \pi_k({\D y}, {\D y}'|x)\; \lambda(\D x),
\end{split}
\end{equation*}
where $\pi_k(\cdot,\cdot|x)$ is the optimal transport plan between $Q_k(\cdot|x)$ and ${Q}(\cdot|x)$. By the measurable selection theorem,
the mapping $x \mapsto \pi_k(\cdot,\cdot|x)$ may be viewed as a kernel from $\Xc$ to $\Pc(\Yc\times\Yc)$.
Since Definition \ref{d:kernel-convergence} implies that $\lambda \circledast Q_k\rightharpoonup \lambda \circledast Q$,
it follows that $Q_k(\cdot|x) \rightharpoonup Q(\cdot|x)$  for $\lambda$-almost all $x\in \Xc$. For every such $x$, by virtue of the Prohorov theorem,
the sequence $\{Q_k(\cdot|x)\}$ is tight,
and thus the sequence $\{\pi_k(\cdot,\cdot|x)\}$ is tight as well \cite[Lem. 4.4]{villani2009optimal}. By passing to a subsequence, if necessary, we conclude that the sequence $\{\pi_k(\cdot,\cdot|x)\}$
is weakly convergent to some limit $\{\pi^*(\cdot,\cdot|x)\}$. The limit must be the optimal transport from $Q(\cdot|x)$ to itself:
$Q(\cdot|x)\circ \Ib$, where $\Ib$ is the identity kernel $y\mapsto \delta_{y}$. It follows that the limit does not depend on the subsequence;
the entire sequence $\{\pi_k(\cdot,\cdot|x)\}$ is weakly convergent to $\pi^*(\cdot,\cdot|x)$, for $\lambda$-almost all $x$.

For any $R>0$, we have a simple upper bound:
\begin{align*}
 \big[\Wc_{p}^\lambda(Q_k,{Q}) \big]^p
 &\le \int_{\Xc}  \int_{\Yc \times \Yc } \big[d(y, y')\wedge R\big]^{p} \; \pi_k({\D y}, {\D y}'|x)\; \lambda(\D x) \\
 &{\ } + \int_{\Xc}  \int_{\Yc \times \Yc } \big[d(y,y')^p -R^p\big]_+ \; \pi_k({\D y}, {\D y}'|x)\; \lambda(\D x).
 \end{align*}
Using the inequality
\[
\big[d(y,y')^p -R^p\big]_+ \le 2^p d(y,y_0)^p \1_{\{d(y,y_0) \ge R/2\}}+2^p d(y_0,y')^p \1_{\{d(y_0,y') \ge R/2\}},
\]
we can continue the upper bound as follows:
 \begin{align*}
\lefteqn{  \big[\Wc_{p}^\lambda(Q_k,{Q}) \big]^p
\le \int_{\Xc}  \int_{\Yc \times \Yc } \big[d(y, y')\wedge R\big]^{p} \; \pi_k({\D y}, {\D y}'|x)\; \lambda(\D x) }\\
&{\ } + 2^p \hspace{-1em} \int\limits_{\{d(y,y_0) \ge R/2\}} \hspace{-1em} d(y,y_0)^p \; \pi_k({\D y}, {\D y}'|x)\; \lambda(\D x)
 + 2^p \hspace{-1em}\int\limits_{\{d(y_0,y') \ge R/2\}} \hspace{-1em} d(y_0,y')^p \; \pi_k({\D y}, {\D y}'|x)\; \lambda(\D x) \\
&= \int_{\Xc}  \int_{\Yc \times \Yc } \big[d(y, y')\wedge R\big]^{p} \; \pi_k({\D y}, {\D y}'|x)\; \lambda(\D x) \\
&{\ } + 2^p \hspace{-1em} \int\limits_{\{d(y,y_0) \ge R/2\}} \hspace{-1em} d(y,y_0)^p \; Q_k({\D y}|x)\; \lambda(\D x)
 + 2^p \hspace{-1em}\int\limits_{\{d(y_0,y') \ge R/2\}} \hspace{-1em} d(y_0,y')^p \; Q({\D y}'|x)\; \lambda(\D x).
\end{align*}
As the sequence $\{\pi_k(\cdot,\cdot|x)\}$ converges weakly  to $\pi^*(\cdot,\cdot|x)$ for $\lambda$-almost all $x$, the first term on the
right-hand side converges to 0, for every $R>0$. Furthermore, by Definition \ref{d:villani-def-1}(ii),
since $\lambda\circ Q_k \overset{p}\to \lambda\circ Q$,
\[
\lim_{R\to \infty} \limsup_{k\to\infty} \int_{\{d(y,y_0) \ge R/2\}}d(y,y_0)^p \; Q_k({\D y}|x)\; \lambda(\D x) = 0.
\]
The same is true for the third term. Putting these estimates together, we conclude that
$\lim_{k\to\infty} \Wc_{p}^\lambda(Q_k,{Q})   = 0$.
\end{proof}
{
\begin{proof}[Proof of Theorem \ref{t:KR-kernels}]
Theorem \ref{t:KR} implies that for all $f\in F$
\begin{align*}
\Wc_1^\lambda(Q,\widetilde{Q}) &= \int_{\Xc} {W}_{1}(Q(\cdot | x), \widetilde{Q}(\cdot | x)) \;{\lambda}(\D x)\\
&\ge \int_{\Xc}
\left\{\int_{\mathcal{Y}} f(x,y)\; Q({\D y}|x)-\int_{\mathcal{Y}}f(x,y) \; \widetilde{Q}({\D y}|x) \right\}
\;{\lambda}(\D x)\\
&=\int_{\mathcal{X}\times \mathcal{Y}} f(x,y)\; (\lambda \circledast Q)(\D x\;\D y)-\int_{\mathcal{X}\times \mathcal{Y}} f(x,y) \;
(\lambda \circledast \widetilde{Q})(\D x\;\D y).
\end{align*}
This verifies the inequality ``$\ge$'' in \eqref{KR-kernels}. To verify the reverse inequality, let $\varepsilon>0$ and define the multifunction
$F_\varepsilon:\Xc \rightrightarrows \text{Lip}(\Yc,\Rb)$ as follows
\begin{multline*}
F_\varepsilon(x) = \Big\{ \psi\in  \text{Lip}(\Yc,\Rb): \|\psi\|_{\text {\rm Lip}} \leq 1,\\
\int_{\mathcal{Y}} \psi(y)\; Q({\D y}|x)-\int_{\mathcal{Y}}\psi(y)\; \widetilde{Q}({\D y}|x)
\ge{W}_{1}(Q(\cdot | x), \widetilde{Q}(\cdot | x)) - \varepsilon \Big\}, \quad x\in \Xc.
\end{multline*}
It is measurable and, owing to Theorem \ref{t:KR}, has nonempty closed values.
Therefore, by the measurable selection theorem, a selector $\varPsi_\epsilon: \Xc \to \text{Lip}(\Yc,\Rb)$ exists,
such that $\varPsi_\varepsilon(x)\in F_\varepsilon(x)$ for all $x\in \Xc$. Define $f_\varepsilon(x,y) = \big[\varPsi_\varepsilon(x)\big](y) $, $x \in \Xc$, $y\in \Yc$. By construction,  $f_\epsilon \in F$ and
\begin{align*}
 \Wc_1^\lambda(Q,\widetilde{Q}) &\le \int_{\Xc}
 \bigg\{\int_{\mathcal{Y}} f_\varepsilon(x,y)\; Q({\D y}|x)-\int_{\mathcal{Y}}f_\varepsilon(x,y)\; \widetilde{Q}({\D y}|x) + \varepsilon  \bigg\}
 \;{\lambda}(\D x)\\
  &\le \sup_{f(\cdot,\cdot)\in F}\left\{\int_{\mathcal{X}\times \mathcal{Y}} f(x,y)\; (\lambda \circledast Q)(\D x\,\D y)-\int_{\mathcal{X}\times \mathcal{Y}} f(x,y) (\lambda \circledast \widetilde{Q})(\D x\,\D y)\right\} + \varepsilon.
\end{align*}
Since $\varepsilon>0$ was arbitrary, the inequality ``$\le$'' (and then the equality) in  \eqref{KR-kernels} is true. As subtracting $f(\cdot,y_0)$ from $f(\cdot,\cdot)$ does not affect the right-hand side of \eqref{KR-kernels}, we may restrict $F$ to contain only the functions whose value at $y_0$ is 0.
\end{proof}
}
\section{Comparison of Kernel Distances on Gaussian Mixture Models}

In this section, we consider Gaussian mixture models with varying dimensions and numbers of centers, each having a different weight (marginal probability). We denote by ${\Xc}_0$ the set of the centers, and by $\lambda_0$
the marginal distribution.

In each example, we select grid points from the same set of sample points. The point selection process employs two metrics:
$\Db_{1}(Q, \widetilde{Q})=\sup_{x\in \Xc_0} {W}_{1}(Q(\cdot | x), \widetilde{Q}(\cdot | x))$,
and the integrated transportation distance,
$\Wc_1^{\lambda_0}(Q,\widetilde{Q}) = \sum_{x\in\Xc_0} \lambda_0(x)\, W_1(Q(\cdot|x),\widetilde{Q}(\cdot|x))$.
The number of points to be selected by both methods is the same.

Table~\ref{gmm} presents the dimensions of the mixture model (dim), the number of centers (center), the number of particles sampled from each center (particles), the number of selected particles (selected), the solution times for both methods (in seconds), and the corresponding Wasserstein distance $W_1$ of the selected points to the particle distribution.  For the sake of simplicity, we  refer to the supremum distance as ``sup'' and the integrated transportation distance as ``ITD'' in the table header. The selection algorithm utilizing the integrated transportation distance consistently achieves a lower $W_1$ distance and faster execution time in all examples.\vspace{-1ex}

\begin{table}[H]

\caption{Comparison of the supremum distance and the integrated transportation distance}

\begin{center}
\label{gmm}
\begin{tabular}{ cccccccc}
\toprule
 dim & centers & particles & selected  & sup (s) & ITD (s) & sup $W_1$ & ITD  $W_1$\\
    \midrule
2 & 5 & 400 & 100 & 1329.27 & 1320.15 & 0.288 &0.268 \\
2 & 10 &200 & 100 & 1426.99 & 1296.43 & 0.466 & 0.457 \\
2 & 16 & 160 & 128 & 1365.93 & 812.32 & 0.645 & 0.604\\
3 & 3 & 500 & 375 & 1296.96 &530.36 & 0.913 & 0.901 \\
3 & 5 & 400 & 500 &1931.75 & 1253.03 & 0.953 & 0.784 \\
5 & 3 & 800 & 600 & 1683.79 & 1235.43  & 1.963 & 1.812 \\
    \bottomrule
\end{tabular}
\end{center}

\end{table}

In Figures \ref{fig:P1}--\ref{fig:P3}, the subfigures (a) and (b) illustrate the sample points and the grid points ${z^k}$ (represented by black dots) selected using the supremum distance and the integrated transportation distance, respectively, for the three two-dimensional examples. The sample points ${x^{si}}$ are depicted in different colors to represent the various Gaussian distributions.

In all experiments, the integrated transportation distance model was solved faster and resulted in a more accurate representation of the mixture distribution.
In experiments with problems of higher dimension
these differences were dramatic.

All numerical results were obtained using Python (Version 3.7) on a Macintosh HD laptop with a 2.9 GHz CPU and 16GB memory. The data are available in the working paper version.

\begin{figure}[H]
\centering

\begin{subfigure}{0.45\textwidth}
    \includegraphics[width=\textwidth]{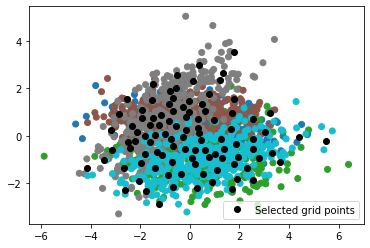}
  \caption{The supremum distance selection.}
  \label{fig:sup2}
\end{subfigure}
\hfill
\begin{subfigure}{0.45\textwidth}
    \includegraphics[width=\textwidth]{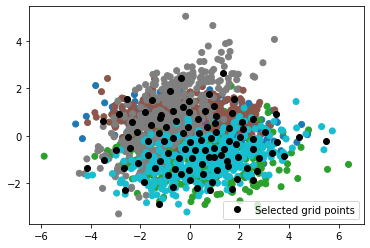}
  \caption{The ITD selection.}
  \label{fig:kernel2}
\end{subfigure}
\vspace{-2ex}
\caption{Gaussian Mixture model with 5 centers and samples of 400 drawn from each center; $\text{dim}(\beta) = 1000000$, $\text{dim}(\gamma) = 500$, and 100 selected representative points.}
\label{fig:P1}
\end{figure}

\vspace{-5ex}

\begin{figure}[H]
\centering

\begin{subfigure}{0.45\textwidth}
    \includegraphics[width=\textwidth]{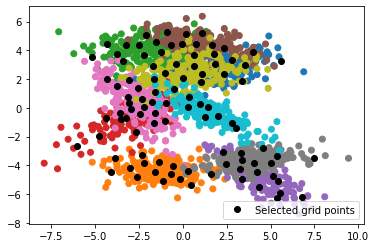}
\caption{The supremum distance selection.}
  \label{fig:sup3}
\end{subfigure}
\hfill
\begin{subfigure}{0.45\textwidth}
    \includegraphics[width=\textwidth]{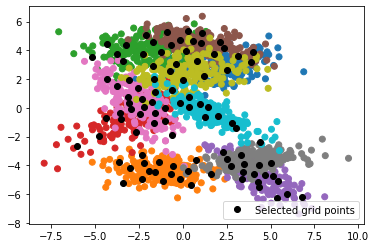}
 \caption{The ITD selection.}

  \label{fig:kernel3}
\end{subfigure}
\vspace{-2ex}

\caption{Gaussian Mixture model with 10 centers and samples of 200 drawn from each center; $\text{dim}(\beta) = 1000000$, $\text{dim}(\gamma) = 500$, and 100 selected  representative points.}
\label{fig:P2}
\end{figure}

\vspace{-5ex}

\begin{figure}[H]
\centering

\begin{subfigure}{0.45\textwidth}
    \includegraphics[width=\textwidth]{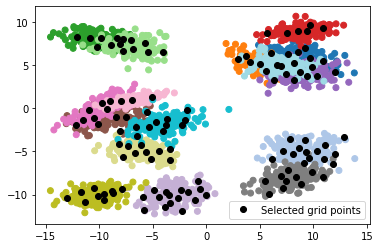}
\caption{The supremum distance selection.}
  \label{fig:sup1}
\end{subfigure}
\hfill
\begin{subfigure}{0.45\textwidth}
    \includegraphics[width=\textwidth]{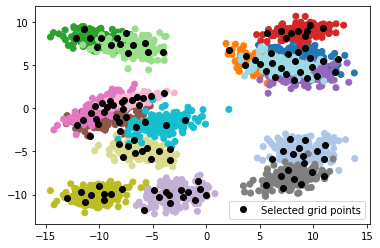}
 \caption{The ITD selection.}
  \label{fig:kernel1}
\end{subfigure}
\vspace{-2ex}

\caption{Gaussian Mixture model with 16 centers and samples of 100 drawn from each center; $\text{dim}(\beta) = 1638400$, $\text{dim}(\gamma) = 640$, and 128 selected  representative points.}
\label{fig:P3}
\end{figure}

The data used for sampling in the six examples listed in Table \ref{gmm} is as follows, where $\mu_i$ and $\sigma_i$ represent the mean and covariance matrix of center $i$, and $\lambda_0$ is the weight vector.

\begin{description}

\item[Dimension 2 and 5 centers]

$$\mu_1=\begin{bmatrix}
0.31266704 \\
0.27504179
\end{bmatrix}, \quad \mu_2=\begin{bmatrix}
0.15120579 \\
-0.92187417
\end{bmatrix}, \quad \mu_3=\begin{bmatrix}
-0.28437279 \\
0.89136637
\end{bmatrix},$$
$$
\mu_4=\begin{bmatrix}
-0.87991064\\
0.72808421
\end{bmatrix},\quad \mu_5=\begin{bmatrix}
0.75458105 \\
-0.89761267
\end{bmatrix}$$

$$\sigma_1=\begin{bmatrix}
2.27171261 & -0.19234173 \\
-0.19234173 & 0.30900127
\end{bmatrix},\quad \sigma_2=\begin{bmatrix}
2.72538843 & -0.18819093 \\
-0.18819093 & 0.76035309
\end{bmatrix},$$
$$\sigma_3=\begin{bmatrix}
2.28960495 & -0.00992103 \\
-0.00992103 & 0.26865532
\end{bmatrix},$$
$$\sigma_4=\begin{bmatrix}
1.52649829 & 1.0452592 \\
1.0452592 & 1.74912699
\end{bmatrix}, \quad \sigma_5=\begin{bmatrix}
2.41616602 & 0.48373093 \\
0.48373093 & 0.51568926
\end{bmatrix}.$$

$$\lambda_0 = [0.15743525, 0.28348483, 0.10232679, 0.03627818, 0.42047495].$$

\item[Dimension 2 and 10 centers]

$$\mu_1=\begin{bmatrix}
 1.79229996  \\
 3.03739036
\end{bmatrix}, \quad \mu_2=\begin{bmatrix}
-1.19058867 \\
-4.34063653
\end{bmatrix}, \quad \mu_3=\begin{bmatrix}
-2.11854401  \\
4.09593528
\end{bmatrix},$$

$$\mu_4=\begin{bmatrix}
 -2.86614646 \\
 -0.47876038
\end{bmatrix}, \quad \mu_5=\begin{bmatrix}
 4.3120602  \\
 -4.75100772
\end{bmatrix}, \quad \mu_6=\begin{bmatrix}
1.00548917  \\
4.501295
\end{bmatrix},$$

$$\mu_7=\begin{bmatrix}
 -2.69697121  \\
 0.48489919
\end{bmatrix}, \quad \mu_8=\begin{bmatrix}
 4.09128375 \\
 -3.66830554
\end{bmatrix}, \quad \mu_9=\begin{bmatrix}
0.23412581  \\
2.50409859
\end{bmatrix},$$

$$\mu_{10}=\begin{bmatrix}
1.69013241 \\
-0.3224714
\end{bmatrix}$$

$$\sigma_1=\begin{bmatrix}
2.59847307 & 0.25010595 \\
0.25010595 & 0.60533531
\end{bmatrix},\quad \sigma_2=\begin{bmatrix}
2.53216876 & -0.14189311 \\
-0.14189311 &  0.3939682
\end{bmatrix}$$

$$\sigma_1=\begin{bmatrix}
2.59847307 & 0.25010595 \\
0.25010595 & 0.60533531
\end{bmatrix},\quad \sigma_2=\begin{bmatrix}
2.53216876 & -0.14189311 \\
-0.14189311 &  0.3939682
\end{bmatrix},$$

$$\sigma_3=\begin{bmatrix}
2.5064178 & 0.23770523 \\
0.23770523 & 0.49430945
\end{bmatrix},\quad \sigma_4=\begin{bmatrix}
1.91557475 & 0.98748 \\
0.98748   & 1.24310123
\end{bmatrix},$$

$$\sigma_5=\begin{bmatrix}
1.75850071 & -0.95310038 \\
-0.95310038 &  0.93108462
\end{bmatrix},\quad \sigma_6=\begin{bmatrix}
2.21886025 & -0.58325887 \\
-0.58325887 &  0.46024507
\end{bmatrix},$$

$$\sigma_7=\begin{bmatrix}
1.35681743 & -1.09247798 \\
-1.09247798 &  1.61345614
\end{bmatrix},\quad \sigma_8=\begin{bmatrix}
2.20271214 & 0.32602845\\
0.32602845 & 0.29813593
\end{bmatrix},$$

$$\sigma_9=\begin{bmatrix}
2.5345792 & 0.3575912 \\
0.3575912 & 0.63866598
\end{bmatrix},\quad \sigma_{10}=\begin{bmatrix}
1.7025472 & -0.87512782\\
-0.87512782 &  0.69863228
\end{bmatrix}.$$

$$\lambda_0 = [0.01564952, 0.15994791, 0.08991018, 0.14837025, 0.2005688,\\$$
$$0.11043623, 0.10277117, 0.01477645, 0.05505221, 0.10251729].$$

\item[Dimension 2 and 16 centers]

$$\mu_1=\begin{bmatrix}
9  \\
6
\end{bmatrix}, \quad \mu_2=\begin{bmatrix}
9\\
-5
\end{bmatrix}, \quad \mu_3=\begin{bmatrix}
5 \\
5
\end{bmatrix},\quad \mu_4=\begin{bmatrix}
-10 \\
8
\end{bmatrix},$$

$$\mu_5=\begin{bmatrix}
-7  \\
7
\end{bmatrix}, \quad \mu_6=\begin{bmatrix}
9\\
9
\end{bmatrix}, \quad \mu_7=\begin{bmatrix}
9 \\
4
\end{bmatrix},\quad \mu_8=\begin{bmatrix}
-3 \\
-10
\end{bmatrix},$$

$$\mu_9=\begin{bmatrix}
-9  \\
-1
\end{bmatrix}, \quad \mu_{10}=\begin{bmatrix}
-10\\
0
\end{bmatrix}, \quad \mu_{11}=\begin{bmatrix}
-7 \\
1
\end{bmatrix},\quad \mu_{12}=\begin{bmatrix}
8 \\
-8
\end{bmatrix},$$

$$\mu_{13}=\begin{bmatrix}
-10  \\
-10
\end{bmatrix}, \quad \mu_{14}=\begin{bmatrix}
-6\\
-5
\end{bmatrix}, \quad \mu_{15}=\begin{bmatrix}
-4 \\
-2
\end{bmatrix},\quad \mu_{16}=\begin{bmatrix}
7 \\
5
\end{bmatrix}$$

$$\sigma_1=\begin{bmatrix}
2.49118778 & -0.52372617 \\
-0.52372617 &  0.57790164
\end{bmatrix},\quad \sigma_2=\begin{bmatrix}
2.7960884 & 0.07202756\\
0.07202756 & 0.7859871
\end{bmatrix},$$

$$\sigma_3=\begin{bmatrix}
1.84191462 & -0.85412423 \\
-0.85412423 &  0.7311887
\end{bmatrix},\quad \sigma_4=\begin{bmatrix}
2.47150611 &-0.19202043\\
-0.19202043 &  0.36136642
\end{bmatrix},$$

$$\sigma_5=\begin{bmatrix}
2.57147619 & -0.16840333 \\
-0.16840333 &  0.5958216
\end{bmatrix},\quad \sigma_6=\begin{bmatrix}
2.36451515 & 0.25765723\\
0.25765723 & 0.34956037
\end{bmatrix},$$

$$\sigma_7=\begin{bmatrix}
2.70927746& 0.1068342 \\
0.1068342 & 0.62088381
\end{bmatrix},\quad \sigma_8=\begin{bmatrix}
2.70676862 & 0.3036051\\
0.3036051 & 0.80040108
\end{bmatrix},$$

$$\sigma_9=\begin{bmatrix}
2.8751296 & 0.01771981 \\
0.01771981 & 0.86839542
\end{bmatrix},\quad \sigma_{10}=\begin{bmatrix}
1.66592325& 0.95932126\\
0.95932126&1.09707757
\end{bmatrix},$$

$$\sigma_{11}=\begin{bmatrix}
2.00518301& 0.44538624 \\
0.44538624& 0.20326645
\end{bmatrix},\quad \sigma_{12}=\begin{bmatrix}
2.42869787& 0.45667907\\
0.45667907& 0.60163072
\end{bmatrix},$$

$$\sigma_{13}=\begin{bmatrix}
2.46641227& 0.25352207 \\
0.25352207& 0.5117985
\end{bmatrix},\quad \sigma_{14}=\begin{bmatrix}
2.41577764&-0.39180762\\
-0.39180762&  0.55948601
\end{bmatrix},$$

$$\sigma_{15}=\begin{bmatrix}
2.51260059& 0.54101916 \\
0.54101916& 0.76445211
\end{bmatrix},\quad \sigma_{16}=\begin{bmatrix}
2.36191609& -0.34632195\\
0.34632195&  0.4390211
\end{bmatrix}.$$

$$\lambda_0  = [ 0.06011042, 0.07833323, 0.06601944, 0.05967994, 0.04640204, $$
$$0.07074346, 0.04792803, 0.09767407, 0.10554801. 0.04199756, $$
$$0.08671602, 0.05792878, 0.06221676, 0.10137871, 0.00778043, 0.00954309 ]. $$

\item[Dimension 3 and 3 centers]

$$\mu_1=\begin{bmatrix}
0.10827605  \\
3.92946954  \\
3.96293089
\end{bmatrix}, \quad \mu_2=\begin{bmatrix}
-3.7441469 \\
-2.92757122 \\
-4.48532797
\end{bmatrix}, \quad \mu_3=\begin{bmatrix}
-0.59190156 \\
-4.70123789 \\
-0.43166776
\end{bmatrix}.$$

$$\sigma_1=\begin{bmatrix}
1.68353569 &  1.50050598 &  0.0679262  \\
1.50050598 &  2.16517974 & -0.15853728 \\
0.0679262 & -0.15853728 &  0.44174394
\end{bmatrix},$$
$$\sigma_2=\begin{bmatrix}
1.44858553 & -1.2905356 & -0.80287245\\
-1.2905356 &  2.04023659 &  0.78596847 \\
-0.80287245 &  0.78596847 & 1.23990577
\end{bmatrix},$$

$$\sigma_3=\begin{bmatrix}
0.54892866 & -0.2603193 & -0.0255528  \\
-0.2603193 &  3.106415  &  0.92873064 \\
-0.02555282 &  0.92873064 &  0.63110853
\end{bmatrix}.$$

$$ \lambda_0 = [0.35538777, 0.45691364, 0.18769858].$$

\item[Dimension 3  and 5 centers]

$$\mu_1=\begin{bmatrix}
1.56333522 \\
1.37520896  \\
0.75602894
\end{bmatrix}, \quad \mu_2=\begin{bmatrix}
-4.60937084 \\
-1.42186396 \\
4.45683187
\end{bmatrix}, \quad \mu_3=\begin{bmatrix}
-4.3995532 \\
3.64042104 \\
3.77290526
\end{bmatrix},$$

$$\mu_4=\begin{bmatrix}
-4.48806334 \\
1.52418615 \\
0.51751369
\end{bmatrix}, \quad \mu_5=\begin{bmatrix}
0.97513253 \\
-0.16471376 \\
-2.17011839
\end{bmatrix}.$$

$$\sigma_1=\begin{bmatrix}
0.6244727 & -0.7239226 & -0.4573932  \\
-0.7239226 & 1.85830256 & 1.37237297 \\
-0.4573932 & 1.37237297 & 1.83967989
\end{bmatrix},$$
$$\sigma_2=\begin{bmatrix}
1.52528618 & 1.13821566 & -0.96273147\\
1.13821566 &  2.07065186 & -0.85116858 \\
-0.96273147 & -0.85116858 &  1.24867171
\end{bmatrix},$$
$$\sigma_3=\begin{bmatrix}
1.83612253 & 1.4455393 & -0.62284455  \\
1.4455393 & 2.04441818 & -0.51266313\\
-0.62284455 & -0.51266313 & 0.95234161
\end{bmatrix},$$
$$\sigma_4=\begin{bmatrix}
 0.72600504 & -0.6691261 & -0.66730684 \\
-0.6691261 & 2.12310268 & 1.28413393\\
-0.66730684 &  1.28413393 &  1.60606812
\end{bmatrix},$$
$$\sigma_5=\begin{bmatrix}
0.20310482 & -0.36713897 &  0.03372048  \\
-0.36713897 &  2.862603 & -0.76036188\\
0.03372048 & -0.76036188 &  0.26176859]
\end{bmatrix}.$$

$$\lambda_0 = [0.15743525, 0.28348483, 0.10232679, 0.03627818, 0.42047495].$$

\item[Dimension 5 and 3 centers]

$$\mu_1=\begin{bmatrix}
0.10827605\\
3.92946954\\
3.96293089\\
-3.7441469\\
-2.92757122
\end{bmatrix}, \quad \mu_2=\begin{bmatrix}
-4.48532797\\
-0.59190156\\
-4.70123789\\
-0.43166776\\
1.49144048
\end{bmatrix}, \quad \mu_3=\begin{bmatrix}
-2.21512717\\
1.76254902\\
0.90862817\\
-4.76018118\\
0.58854088
\end{bmatrix}.$$

$$\sigma_1=\begin{bmatrix}
0.6141946& -0.0185777&  0.1488841& -0.0632101& -0.2064803 \\
-0.0185777&  1.4940510&  1.4099103&  0.8157628& -1.1855394 \\
0.1488841& 1.4099103&  2.2648389&  1.0221659& -1.7608476 \\
-0.0632101&  0.8157628&  1.0221659&  1.1607710& -0.9997849 \\
-0.2064803& -1.1855394& -1.7608476& -0.9997849&  1.9458983
\end{bmatrix},$$
$$\sigma_2=\begin{bmatrix}
1.7200913&  1.3976971&  1.1416574& 0.0387295& -1.5148418 \\
 1.3976971&  1.612375&  0.969004& -0.025671& -1.441501  \\
1.1416574& 0.9690040&  1.1830867& -0.1298122& -1.0327085 \\
0.0387295& -0.0256711& -0.1298122&  0.7039679&  0.0605373 \\
-1.5148418& -1.441501 & -1.0327085&  0.0605373&  1.9484404
\end{bmatrix},$$
$$\sigma_3=\begin{bmatrix}
0.5592179& -0.1198072& -0.1764103&  0.5732007&  0.1635879 \\
-0.1198072&  1.3423229&  0.3395780& -1.7867579& -0.3338649 \\
-0.1764103&  0.3395780&  0.8870684& -0.6177889&  0.0123156 \\
0.5732007& -1.7867579& -0.6177889&  4.258851 &  1.1801519\\
 0.1635879& -0.3338649&  0.0123156&  1.1801519&  0.8692625
\end{bmatrix}.$$

$$\lambda_0 = [0.35538777,0.45691364,0.18769858].$$

\end{description}

\end{document}